\documentclass[11pt]{amsart}
\usepackage{amssymb,amsthm,mathrsfs,footnote}
\usepackage{indentfirst}
\usepackage{mathtools}

\usepackage{tikz,xcolor}

\usepackage[colorlinks=true]{hyperref}
\hypersetup{allcolors=[rgb]{0,0.5,0.5}}
\newtheorem{thm}{Theorem}[section]

\newtheorem{cor}[thm]{Corollary}

\newtheorem{lem}[thm]{Lemma}
\newtheorem{rem}[thm]{Remark}

\newtheorem{pro}[thm]{Proposition}
\newtheorem{definition}[thm]{Definition}
\newcommand\intr[1]{\mathring{#1}}

\newcommand{\suchthat}{\;\ifnum\currentgrouptype=16 \middle\fi|\;}

\newcommand{\mr}{\mathscr{R}}

\renewcommand{\Im}{\mbox{Im }}

\newcommand{\R}{\mathbb R}
\newcommand{\C}{\mathbb C}
\newcommand{\N}{\mathbb N}
\renewcommand{\H}{\mathbb H}
 
\newcommand{\toby}[1]{\stackrel{#1}{\longrightarrow}}
\newcommand{\norm}[1]{|\!|{#1}|\!|}
\def\subclassname{{\bfseries 2010 Mathematics Subject Classification
}\enspace}
\def\subclass#1{\par\addvspace\medskipamount{\rightskip=0pt plus1cm
\def\and{\ifhmode\unskip\nobreak\fi\ $\cdot$
}\noindent\subclassname\ignorespaces#1\par}}
\def\emailname{E-mail}%
\def\email#1{\emailname: #1}
%
%
%
%

\definecolor{lime}{HTML}{A6CE39}
\DeclareRobustCommand{\orcidicon}{%
	\begin{tikzpicture}
	\draw[lime, fill=lime] (0,0) 
	circle [radius=0.16] 
	node[white] {{\fontfamily{qag}\selectfont \tiny ID}};
	\draw[white, fill=white] (-0.0625,0.095) 
	circle [radius=0.007];
	\end{tikzpicture}
	\hspace{-2mm}
}

\foreach \x in {A, ..., Z}{%
	\expandafter\xdef\csname orcid\x\endcsname{\noexpand\href{https://orcid.org/\csname orcidauthor\x\endcsname}{\noexpand\orcidicon}}
}


\begin{document}

\title{On the cell structure of flag manifolds}

\author[Moncef Ghazel]{Moncef Ghazel\orcidA{}}
\date{}

\maketitle
\begin{abstract}
We here define a cell structure for real, complex and quaternionic flag manifolds in a unified way. Our method is geometric in nature and is inspired from a method due to Milnor and Stasheff, which they used to define a cell structure for real Grassmann manifolds. 
  \end{abstract}
	
	\let\thefootnote\relax\footnote{Moncef Ghazel}
 \let\thefootnote\relax\footnote{\email{moncef.ghazel@fst.utm.tn}}
 \let\thefootnote\relax\footnote{Faculté des Sciences de Tunis, University of Tunis El Manar.  Tunis, Tunisia.}

\noindent{\bf Keywords:} CW-complex, Stiefel manifold, Grassmann manifold, Flag manifold.\\
{\bf 2010 Mathematics Subject Classification:} 14M15, 57Q05.

\section{Introduction}\label{s1}
A cell structure on a Hausdorff topological space is a partition of it into open cells in a regular way. Such a structure yields, in many interesting cases, information about the topological data associated to the space using the means of algebraic topology. More precisely, a cell structure allows for computation of Betti numbers, Poincaré polynomial and Euler characteristic. Singular (co)homology of a CW-complex is readily computable using cellular homology. To compute an extraordinary (co)homology theory for a CW-complex, one has the Atiyah-Hirzebruch spectral sequence which is the analogue of cellular homology \cite{AH2}. The Serre spectral sequence is defined using a CW-complex approximation of the base space of the fibration.\\

Perhaps one of the most important cell structures defined in the literature is that of the real and complex Grassmann manifolds given by Ehresmann \cite{E1}, for it was essentially used  in the theory of caracteristic classes developed by Pontrjagin and Chern \cite{P, C}. In a subsequent work \cite{E2}, Ehresmann extended his construction of the cell structure to the case of real flag manifolds. His definitions have certain deficiencies in the sense that they do not allow for a comprehensive proof of the axioms of a CW-complex. This is no surprise, for his work preceded that Whitehead on his celebrated combinatorial homotopy paper, where he gave the definitive definition of a CW-complex. The purpose of the present work is two fold: firstly,  to suggest a new definition of Ehresmann CW-complex structure of real fleg manifolds which allows one to give an exhaustive proof of the required axioms; and secondly, to extend this definition to the complex and quaternionic cases. Our method maybe viewed as a strict generalization of that of Milnor and Stasheff for the cell structure of real Grassmann manifolds \cite[Chapter 6]{MS}.\\

We next describe the contents of the different sections of this paper. In Section \ref{s2}, we define the notion of an open cell in a topological space and give some of its technical properties. These properties will be used in the proof of our main results. Section \ref{s3} is a preliminary one, it contains the definitions and the basic properties of Stiefel and Grassmann manifolds. In Section \ref{s4}, we briefly review the method of Milnor and Stasheff \cite[Chapter 6]{MS} for the cell structure of real Grassmann manifolds and extend it to the complex and quaternionic cases. The results of Section \ref{s4} are used as the basis for the reconstruction of the cell structure for flag manifolds which is done in the last two Sections.\\

To conclude this introduction, we observe that there is a cell structure defined for flag manifolds using Lie theoretic methods (See \cite[page 193]{B1} and  \cite{B2}) 
    
\section{Cells and CW-complexes}\label{s2}
For any topological space $X$ and any subset $A$ of $X$, let $\intr{A}$ and $\bar{A}$ denote the interior and the closure of $A$ respectively.  Let $D^n$ be the closed unit ball in the $n$-dimensional Euclidien space $\R^n$  and $S^{n-1}$ the unit sphere in $\R^n$.
\begin{definition}\label{d2}
Let $X$ be a topological space.
\begin{enumerate}
	\item A subset $e$ of $X$ is called an open cell in $X$ if there exists a continuous map
$f:D^n\longrightarrow X$ which maps $\intr{D^n}$ homeomorphically onto $e$. $f$ is then called a caracteristic map for $e$. The integer $n$ is unique, it is called the dimension of the open cell $e$ and is denoted by $\dim e$.
	\item The boundary of an open cell $e$ in $X$ is the subset  $\partial e=\bar{e}-e$.
\end{enumerate} 
\end{definition}

The next lemma will play an important role in the sequel.
\begin{lem}\label{l16}
Let $e$ be an open cell in a Hausdorff space $X$ with caracteristic map $f:D^n\longrightarrow X$. Then
\begin{enumerate}
	\item $f(D^n)=\bar{e}$.
	\item $f(\intr{D^n})\cap f(S^{n-1})=\emptyset$.
	\item $\partial e=f(S^{n-1})$; i.e. $\partial e=f(\partial \intr{D^n})$.
\end{enumerate}
 In particular, $f(D^n)$ and $f(S^{n-1})$ depend only on $e$ and not on the choice of the characteristic map $f$ of $e$.
\end{lem}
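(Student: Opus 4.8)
The plan is to establish the three claims in order, since (3) will follow by combining (1) and (2). Throughout I write $g=f|_{\intr{D^n}}\colon \intr{D^n}\to e$ for the restriction, which by hypothesis is a homeomorphism onto $e$; let $h\colon e\to\intr{D^n}$ denote its continuous inverse. I also use the two elementary facts that $D^n$ is compact and that $\overline{\intr{D^n}}=D^n$ and $S^{n-1}=\partial\,\intr{D^n}$ in $\R^n$.

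For (1), since $D^n$ is compact and $f$ is continuous, $f(D^n)$ is compact, hence closed in the Hausdorff space $X$. As $e=g(\intr{D^n})\subseteq f(D^n)$, taking closures gives $\bar{e}\subseteq f(D^n)$. Conversely, continuity of $f$ together with $D^n=\overline{\intr{D^n}}$ yields $f(D^n)=f(\overline{\intr{D^n}})\subseteq\overline{f(\intr{D^n})}=\bar{e}$. The two inclusions give $f(D^n)=\bar{e}$.

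The heart of the matter is (2), and I expect it to be the only real obstacle; it is where the hypothesis that $f$ restricts to a \emph{homeomorphism} (and not merely a continuous bijection) onto $e$ is genuinely used. Suppose toward a contradiction that some $x$ lies in $f(\intr{D^n})\cap f(S^{n-1})$. Then $x=f(b)$ for some $b\in S^{n-1}$, and since $x\in e=f(\intr{D^n})$ the point $a=h(x)\in\intr{D^n}$ is its unique preimage under $g$. Choose the sequence $b_k=(1-1/k)\,b\in\intr{D^n}$, so that $b_k\to b$ in $D^n$ and, by continuity of $f$, $f(b_k)\to f(b)=x$ in $X$. Because $x$ and all the $f(b_k)$ lie in $e$, this convergence also holds in the subspace $e$; applying the continuous inverse $h$ gives $b_k=h(f(b_k))\to h(x)=a$. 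Thus the sequence $(b_k)$ converges both to $b$ and to $a$ in the Hausdorff space $D^n$, forcing $a=b$ — impossible since $a\in\intr{D^n}$ while $b\in S^{n-1}$. Hence the intersection is empty.

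Finally, for (3), combine the previous two parts. By (1), $\bar{e}=f(D^n)=f(\intr{D^n})\cup f(S^{n-1})=e\cup f(S^{n-1})$, so $\partial e=\bar{e}-e=f(S^{n-1})-e$; and by (2) the set $f(S^{n-1})$ is already disjoint from $e=f(\intr{D^n})$, whence $\partial e=f(S^{n-1})=f(\partial\,\intr{D^n})$. The concluding remark is then immediate: both $f(D^n)=\bar{e}$ and $f(S^{n-1})=\partial e=\bar{e}-e$ are described purely in terms of $e$, so neither depends on the chosen characteristic map.
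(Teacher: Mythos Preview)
Your proof is correct. Parts (1) and (3) coincide with the paper's argument verbatim. For part (2), however, the paper takes a different route: given $y_0\in e$, it picks an open neighborhood $V$ of $y_0$ in the subspace $e$ whose closure $\bar V$ is compact in $e$ (possible since $e\cong\intr{D^n}$ is locally compact), observes that $g^{-1}(\bar V)$ is then a compact subset of $\intr{D^n}$ and hence contained in some ball $B(0,r_0)$ with $r_0<1$, and from $S^{n-1}\subset\overline{\intr{D^n}\setminus B(0,r_0)}$ deduces $f(S^{n-1})\subset\bar e\setminus V$, so $y_0\notin f(S^{n-1})$. Your sequential contradiction is shorter and arguably more transparent: you exploit continuity of the inverse $h$ on a single explicit sequence $(1-1/k)b$ rather than on a whole neighborhood. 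Both arguments rest on the same idea---continuity of $h=g^{-1}$ forces preimages to stay in the interior---but yours packages it more economically. The paper's version is sequence-free, which would matter in a non--first-countable setting, but since everything here lives in $D^n$ that generality buys nothing.
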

\begin{proof}
\begin{enumerate}
	\item $e\subset f(D^n)$, $f(D^n)$ is compact in $X$ which is Hausdorff, therefore $f(D^n)$ is closed, it follows that
	$\bar{e}\subset f(D^n)$. In the other hand,\\
	$f(D^n)=f(\bar{\intr{D^n}})\subset\overline{f(\intr{D^n})}=\bar{e}$; i.e. $f(D^n)\subset\bar{e}$. Thus $\bar{e}=f(D^n)$.
	\item Let $g:\intr{D^n}\longrightarrow e$, be the homeomorphism induced by $f$. Let $y_0\in e$, $V$ an open neighborhood of $y_0$ in the subspace $e$ of $X$ with compact closure $\bar{V}$ in $e$.	
	$g^{-1}(\bar V)$ is compact in $\intr{D^n}$, there exists $r_0\in(0,1)$ such that $g^{-1}(\bar V)\subset B(0,r_0)$ where
	$B(0,r_0)$ is the open ball of center $0$ and radius $r_0$.
	$g^{-1}(\bar V)\subset B(0,r_0)$, thus
	\begin{eqnarray*}
\intr{D^n}-B(0,r_0)&\subset&\intr{D^n}-g^{-1}(\bar V)\\
          &=&g^{-1}(e)-g^{-1}(\bar V)\\
           &=&g^{-1}(e-\bar V)  \\
					&\subset& g^{-1}(e-V) 
					\end{eqnarray*}
i.e. $\intr{D^n}-B(0,r_0)\subset g^{-1}(e-V)$, therefore $g(\intr{D^n}-B(0,r_0))\subset e-V$.
	$$ S^{n-1} \subset D^n-B(0,r_0) = \overline{\intr{D^n}-B(0,r_0)} .$$ Therefore
\begin{eqnarray*}
f(S^{n-1}) &\subset &  f(\overline{\intr{D^n}-B(0,r_0)}) \\ 
          &\subset & \overline{f(\intr{D^n}-B(0,r_0)}) \\
           &=& \overline{g(\intr{D^n}-B(0,r_0)}) \\
					&\subset&\overline{e-V}\\
					&=&\overline{ e\cap (X-V)}\\
					&=&\overline{e} \cap \overline{(X-V)}\\
					&=&\overline{e} \cap (X-V)\\
					&= &\bar{e}-V.
\end{eqnarray*}
$y_0\notin\bar{e}-V$, thus $y_0\notin f(S^{n-1})$.
This shows that 
  
 $$f(\intr{D^n})\cap f(S^{n-1})=e\cap f(S^{n-1})=\emptyset$$
  
\item 
\begin{eqnarray*}
\partial e&=&\bar{e}-e \\
          &=&f(D^n)-f(\intr{D^n}) \\
           &=&(f(\intr{D^n})\cup f(S^{n-1}))-f(\intr{D^n})  \\
					&=& f(S^{n-1})   \qquad (\mbox{by } (2))
					\end{eqnarray*}
\end{enumerate}					
\end{proof}
\begin{lem}\label{l17}
Let $K$ be compact space, $X$ a Hausdorff space and $f:K\longrightarrow X$ continuous. Then the induced map
$\tilde{f}:K\longrightarrow f(K)$ is a quotient map.
\end{lem}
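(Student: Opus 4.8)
The plan is to reduce the statement to the \emph{closed map lemma}: a continuous map from a compact space to a Hausdorff space is closed, and a continuous closed surjection is automatically a quotient map. First I would record the basic facts about the players. The corestriction $\tilde f\colon K\to f(K)$ is continuous (it is $f$ with restricted codomain) and surjective by construction. Moreover $f(K)$, being a subspace of the Hausdorff space $X$, is itself Hausdorff. These are the only hypotheses I shall use.

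Next I would show that $\tilde f$ is a closed map. Let $C\subseteq K$ be closed. Since $K$ is compact, $C$ is compact, so its continuous image $\tilde f(C)$ is compact in $f(K)$. As $f(K)$ is Hausdorff, every compact subset is closed, hence $\tilde f(C)$ is closed in $f(K)$. This is the crux of the argument and, conceptually, the main point one must get right: it is exactly here that both compactness of the source and the Hausdorff property of the target are consumed.

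With closedness in hand, I would verify the quotient condition directly. A subset $U\subseteq f(K)$ is open if and only if $\tilde f^{-1}(U)$ is open in $K$; the forward implication is continuity, so only the converse requires work. Suppose $\tilde f^{-1}(U)$ is open, so $K\setminus\tilde f^{-1}(U)$ is closed. By the previous step $\tilde f\bigl(K\setminus\tilde f^{-1}(U)\bigr)$ is closed in $f(K)$. The remaining (purely set-theoretic) point is the identity
\[
\tilde f\bigl(K\setminus\tilde f^{-1}(U)\bigr)=f(K)\setminus U,
\]
which holds because $\tilde f$ is surjective: every $y\in f(K)\setminus U$ has a preimage lying outside $\tilde f^{-1}(U)$, while no preimage of a point of $U$ can lie outside $\tilde f^{-1}(U)$. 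Hence $f(K)\setminus U$ is closed, i.e. $U$ is open, completing the verification.

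I do not anticipate any genuine obstacle here; the only care needed is to keep the surjectivity hypothesis in play for the displayed set identity (without it one obtains merely an inclusion). Everything else is the standard compact-to-Hausdorff machinery, and the statement follows once the closed map lemma is established.
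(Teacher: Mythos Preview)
Your proof is correct and uses essentially the same idea as the paper: a closed subset of $K$ is compact, its image is compact in the Hausdorff space $f(K)$, hence closed. The only cosmetic difference is that the paper verifies the quotient condition directly in its closed-set form (if $\tilde f^{-1}(C)$ is closed then $C=\tilde f(\tilde f^{-1}(C))$ is compact, hence closed), whereas you first record that $\tilde f$ is a closed map and then pass through complements; the underlying argument is the same.
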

\begin{proof}
$\tilde{f}$ is continuous. Let $C$ be a subset of $f(K)$ such that $\tilde{f}^{-1}(C)$ is closed, and is therefore $\tilde{f}^{-1}(C)$ is compact. It follows that
$C=\tilde{f}(\tilde{f}^{-1}(C))$ is compact in $f(K)$ which is Hausdorff. Therefore $C$ is closed in $f(K)$. It follows that $\tilde{f}$ is a quotient map.
\end{proof}
\begin{lem}\label{l18}
Let $X \stackrel{q}{\longrightarrow}Y$ be a quotient map and $O$ an open subset of $Y$. Then the induced map
$\tilde{q}: q^{-1}(O)\longrightarrow O$ is a quotient map.
\end{lem}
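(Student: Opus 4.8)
The plan is to verify the three defining features of a quotient map for $\tilde q$ one at a time: that it is surjective, that it is continuous, and that a subset of $O$ is open exactly when its $\tilde q$-preimage is open in $q^{-1}(O)$. Throughout, $q^{-1}(O)$ carries the subspace topology from $X$ and $O$ the subspace topology from $Y$. The essential input, and the only place where the hypothesis on $O$ is used, is the observation that $q^{-1}(O)$ is open in $X$; this holds because $q$ is continuous and $O$ is open.

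Surjectivity and continuity are immediate. Since $q$ is surjective, every $y\in O$ has a $q$-preimage $x\in X$; such an $x$ lies in $q^{-1}(O)$ and satisfies $\tilde q(x)=y$, so $\tilde q$ is onto. Continuity of $\tilde q$ follows at once, being the restriction of the continuous map $q$ to the subspace $q^{-1}(O)$ with codomain cut down to $O$.

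The heart of the argument is to show that if $A\subseteq O$ has $\tilde q^{-1}(A)$ open in $q^{-1}(O)$, then $A$ is open in $O$; the converse implication is just the continuity already established. First note that for $A\subseteq O$ one has $\tilde q^{-1}(A)=q^{-1}(A)$, since $q^{-1}(A)\subseteq q^{-1}(O)$ automatically. Now suppose $\tilde q^{-1}(A)$ is open in $q^{-1}(O)$. Because $q^{-1}(O)$ is itself open in $X$, a set open in the subspace $q^{-1}(O)$ is open in $X$; hence $q^{-1}(A)$ is open in $X$. As $q$ is a quotient map, this forces $A$ to be open in $Y$. Finally, $A\subseteq O$ gives $A=A\cap O$, which is therefore open in the subspace $O$, as required.

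The argument is essentially formal once this observation is in place, so I expect no serious obstacle. The one step that must be handled with care is the transitivity of openness across nested subspaces, namely that a set open in $q^{-1}(O)$ is open in $X$; this relies crucially on $q^{-1}(O)$ being \emph{open} in $X$ and would fail for a general (non-open) saturated subset, which is precisely why the openness of $O$ cannot be dispensed with.
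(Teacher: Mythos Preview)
Your proof is correct and is precisely the standard argument that the paper leaves implicit with the single word ``Clear.'' There is nothing to compare: you have simply spelled out the routine verification (surjectivity, continuity, and the key step that openness of $q^{-1}(O)$ in $X$ lets one promote openness in the subspace to openness in $X$) that the paper declines to write out.
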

\begin{proof}
Clear.
\end{proof}
\begin{lem}\label{l19}
Let $X$ be a Hausdorff space, $f:D^n\longrightarrow X$ a continuous map such that
\begin{enumerate}
	\item $f_{|\intr{D^n}}$ is injective.
	\item $f(\intr{D^n})\cap f(S^{n-1})=\emptyset$.
\end{enumerate}
Then $f(\intr{D^n})$ is an open cell in $X$ with characteristic map $f$.
\end{lem}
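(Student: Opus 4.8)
The plan is to show that the restriction $g := f_{|\intr{D^n}} : \intr{D^n} \to e$, where $e := f(\intr{D^n})$, is a homeomorphism; since $f$ is already continuous, this is precisely what is needed to exhibit $f$ as a characteristic map for $e$, so that $e$ is an open cell in $X$. By hypothesis $(1)$ the map $g$ is injective, and it is surjective onto $e$ by definition, so $g$ is a continuous bijection; the whole difficulty is the continuity of $g^{-1}$, which I would obtain by recognizing $g$ as a quotient map.

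First I would apply Lemma \ref{l17} to the continuous map $f : D^n \to X$ from the compact space $D^n$ into the Hausdorff space $X$: this gives that the induced map $\tilde{f} : D^n \to f(D^n)$ is a quotient map. The next step is to check that $e$ is open in $f(D^n)$. For this I compute the preimage $\tilde{f}^{-1}(e)$. Certainly $\intr{D^n} \subset \tilde{f}^{-1}(e)$; conversely, if $x \in S^{n-1}$ then $f(x) \in f(S^{n-1})$, and hypothesis $(2)$ forbids $f(x) \in f(\intr{D^n}) = e$, so no point of $S^{n-1}$ lies in $\tilde{f}^{-1}(e)$. Hence $\tilde{f}^{-1}(e) = \intr{D^n}$, which is open in $D^n$. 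Because $\tilde{f}$ is a quotient map, it follows at once that $e$ is open in $f(D^n)$.

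Now I would invoke Lemma \ref{l18} with the quotient map $\tilde{f} : D^n \to f(D^n)$ and the open set $O = e$: the induced map $\tilde{f}^{-1}(e) \to e$, that is $g : \intr{D^n} \to e$, is again a quotient map. Finally I would use the elementary fact that a bijective quotient map is a homeomorphism: for any open $V \subset \intr{D^n}$, injectivity gives $g^{-1}(g(V)) = V$, which is open, so $g(V)$ is open by the quotient property; thus $g$ is an open continuous bijection, hence a homeomorphism. This shows that $f$ maps $\intr{D^n}$ homeomorphically onto $e$, so $e$ is an open cell with characteristic map $f$, completing the argument.

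I expect the only real subtlety to be the identity $\tilde{f}^{-1}(e) = \intr{D^n}$, which is exactly where hypothesis $(2)$ is used; everything else is a mechanical chaining of Lemmas \ref{l17} and \ref{l18} together with the standard observation that a bijective quotient map is a homeomorphism.
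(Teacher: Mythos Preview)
Your argument is correct and follows essentially the same route as the paper: apply Lemma~\ref{l17} to get a quotient map onto $f(D^n)$, use hypothesis~(2) to see that $e=f(\intr{D^n})$ is open in $f(D^n)$, then apply Lemma~\ref{l18} and conclude that the resulting bijective quotient map $\intr{D^n}\to e$ is a homeomorphism. The only cosmetic difference is that the paper shows $e$ is open by writing $e=f(D^n)\setminus f(S^{n-1})$ (the latter being compact, hence closed), whereas you compute $\tilde f^{-1}(e)=\intr{D^n}$ directly; both are immediate from hypothesis~(2).
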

\begin{proof}
By Lemma \ref{l17} the induced map $D^n\longrightarrow f(D^n)$ is a quotient map, $f(\intr{D^n})\cap f(S^{n-1})=\emptyset$. Thus
$f(\intr{D^n})=f({D^n})-f(S^{n-1})$. It follows that $f(\intr{D^n})$ is open in $f(D^n)$.
By Lemma \ref{l18}, the induced map $\intr{D^n}\longrightarrow f(\intr{D^n})$
is a quotient map which is injective. Therefore the induced map $\intr{D^n}\longrightarrow f(\intr{D^n})$ is a homeomorphism.
It follows that $f(\intr{D^n})$ is an open cell in $X$ with characteristic map $f$.
\end{proof}
\begin{lem}\label{l20}
Let $X,Y$ be two Hausdorff spaces, $f:X\longrightarrow Y$ a continuous map and $e$ an open cell in $X$. Assume that
\begin{enumerate}
	\item $f_{|e}$ is injective.
	\item $f(e)\cap f(\partial e)=\emptyset$.
\end{enumerate}
Then $f(e)$ is an open cell in $Y$, $\dim f(e)=\dim e$, $\overline{f(e)}=f(\bar{e})$
and $\partial f(e)=f(\partial e)$.
\end{lem}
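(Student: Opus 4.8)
The plan is to produce a characteristic map for $f(e)$ by composing $f$ with a characteristic map of $e$, and then to verify the two hypotheses of Lemma~\ref{l19}. Let $g\colon D^n\longrightarrow X$ be a characteristic map for the open cell $e$, so that $g$ restricts to a homeomorphism $\intr{D^n}\to e$ and $n=\dim e$. I would set $\varphi=f\circ g\colon D^n\longrightarrow Y$, which is continuous as a composite of continuous maps; this is my candidate characteristic map for $f(e)$. The first task is then to check that $\varphi$ meets the requirements of Lemma~\ref{l19}.

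For the injectivity of $\varphi|_{\intr{D^n}}$, observe that $g|_{\intr{D^n}}$ is a homeomorphism onto $e$ and that $f|_e$ is injective by hypothesis~(1), so the composite $\varphi|_{\intr{D^n}}=f\circ\big(g|_{\intr{D^n}}\big)$ is injective. For the disjointness condition, I would first identify the two relevant images. Since $g(\intr{D^n})=e$ we get $\varphi(\intr{D^n})=f(e)$, and by Lemma~\ref{l16}(3) applied to $g$ we have $g(S^{n-1})=\partial e$, hence $\varphi(S^{n-1})=f(\partial e)$. Therefore $\varphi(\intr{D^n})\cap\varphi(S^{n-1})=f(e)\cap f(\partial e)=\emptyset$ by hypothesis~(2). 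Lemma~\ref{l19} now applies and shows that $\varphi(\intr{D^n})=f(e)$ is an open cell in $Y$ with characteristic map $\varphi$; uniqueness of the dimension (Definition~\ref{d2}) then gives $\dim f(e)=n=\dim e$.

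It remains to compute the closure and the boundary, which follow immediately from Lemma~\ref{l16} applied to the cell $f(e)$ with its characteristic map $\varphi$. For the closure, Lemma~\ref{l16}(1) gives $\overline{f(e)}=\varphi(D^n)=f(g(D^n))=f(\bar e)$, where the last equality uses $g(D^n)=\bar e$, again from Lemma~\ref{l16}(1) applied to $g$. For the boundary, Lemma~\ref{l16}(3) gives $\partial f(e)=\varphi(S^{n-1})=f(g(S^{n-1}))=f(\partial e)$, once more using $g(S^{n-1})=\partial e$.

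I do not expect a serious obstacle here: the whole content lies in choosing the right candidate map $\varphi=f\circ g$ and in correctly translating the hypotheses, which are phrased in terms of $e$ and $\partial e$, into the hypotheses of Lemma~\ref{l19}, which are phrased in terms of $\intr{D^n}$ and $S^{n-1}$. The one point that requires care is the invocation of Lemma~\ref{l16}(3) to identify $g(S^{n-1})$ with $\partial e$; this is precisely what lets hypothesis~(2) feed directly into the disjointness requirement of Lemma~\ref{l19}, and it is also what makes the final identities $\overline{f(e)}=f(\bar e)$ and $\partial f(e)=f(\partial e)$ fall out at once.
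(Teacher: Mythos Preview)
Your proof is correct and follows exactly the approach indicated in the paper, which simply records the result as an immediate consequence of Lemma~\ref{l19} and Lemma~\ref{l16}. You have spelled out precisely how those two lemmas combine: compose $f$ with a characteristic map $g$ of $e$, use Lemma~\ref{l16} to translate the hypotheses on $e$ and $\partial e$ into the hypotheses of Lemma~\ref{l19} on $\intr{D^n}$ and $S^{n-1}$, and then read off the closure and boundary identities from Lemma~\ref{l16} applied to $\varphi$.
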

\begin{proof}
Immediate consequence of Lemma \ref{l19} and Lemma \ref{l16}.
\end{proof}
\begin{definition}\label{d3}
A CW-complex is a Hausdorff space $X$ together with a partition of  $X$ into open cells such that
\begin{enumerate}
\item For each open cell $e$ of of the partition of $X$, $\partial e$ is contained in the union of finitely many open cells (of the partition of $X$) of lower dimensions.
	\item A subset $C$ of $X$ such that $\overline{e}\cap C$ is closed for every open cell $e$ of the partition of $X$ is itself closed.  
	\end{enumerate}
\end{definition}
\begin{lem}\label{l22}
Let $X$ be a Hausdorff space space such that 
\begin{enumerate}
	\item $X$ has a finite partition into open cells.
	\item If $e$ is an open cell in the partition of $X$ and $x\in\partial e$, then the unique open cell $e^{'}$ containing $x$ is of lower dimension than $e$.
	\end{enumerate}
Then $X$ is a CW-complex.
\end{lem}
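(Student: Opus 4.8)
The plan is to verify the two axioms of Definition \ref{d3} directly, exploiting the finiteness of the partition throughout; no single step should present a real obstacle, so the work is really a matter of organizing the elementary observations in the right order.

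First I would dispatch axiom (1). Fix an open cell $e$ of the partition and consider its boundary $\partial e$. Every point $x\in\partial e\subset X$ lies in exactly one cell $e'$ of the partition, and by hypothesis (2) that cell satisfies $\dim e'<\dim e$. Let $\mathcal{F}$ be the collection of cells $e'$ that meet $\partial e$; this collection is finite because the whole partition is finite, and each of its members has strictly lower dimension than $e$. Since $\partial e\subset\bigcup_{e'\in\mathcal{F}}e'$, this is exactly axiom (1).

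Next I would establish axiom (2), the weak-topology condition, where the key observation is that the finiteness of the partition together with the compactness of closed cells exhibits $X$ as a finite union of closed sets. Concretely, for each cell $e$ with characteristic map $f\colon D^n\longrightarrow X$, Lemma \ref{l16} part (1) gives $\bar{e}=f(D^n)$, so $\bar{e}$ is the continuous image of a compact set and hence compact; as $X$ is Hausdorff, $\bar{e}$ is closed. Because the cells partition $X$ and each cell lies inside its closure, $X=\bigcup_e e=\bigcup_e\bar{e}$, a finite union. Now let $C\subset X$ satisfy that $\bar{e}\cap C$ is closed for every cell $e$. Intersecting $C$ with this decomposition of $X$ yields $C=\bigcup_e(\bar{e}\cap C)$, a finite union of closed subsets of $X$, hence closed. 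This is axiom (2), and the two axioms together show that $X$ is a CW-complex.

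I do not anticipate a serious obstacle: the entire argument rests on the finiteness of the partition, which simultaneously makes the cover in axiom (1) finite and reduces the weak-topology axiom (2) to a finite union of closed sets. The only input beyond set theory is the compactness of each closed cell, supplied by Lemma \ref{l16} part (1), while hypothesis (2) is used solely to guarantee the dimension drop required in axiom (1).
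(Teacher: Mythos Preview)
Your proof is correct and follows exactly the approach the paper intends: the paper's own proof is simply the one line ``Immediate consequence of the previous definition,'' and you have merely spelled out those immediate consequences. The only remark is that your argument for axiom~(2) uses that $\bar e\cap C$ closed in $\bar e$ implies closed in $X$, which holds because you have already observed $\bar e$ is closed in $X$; this is implicit in your write-up but worth stating explicitly.
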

\begin{proof}
Immediate consequence of the previous definition.
\end{proof}
A CW-complex with finitely many open cells is said to be finite;  a CW-complex is finite iff it is compact.
	\begin{lem}\label{l21}
	Let $X_1,X_2,\dots,X_q$ be Hausdorff topological spaces and $e_1,e_2,\dots,e_q$ open cells in $X_1,X_2,\dots,X_q$ respectively,
	then $e_1\times e_2\times\dots e_q$ is an open cell in $X_1\times X_2\dots\times X_q$ with boundary
	$$\partial(e_1\times e_2\times\dots e_q)=\bigcup_{i=1}^q \bar{e}_1\times \bar{e}_2\times\dots\times\bar{e}_{i-1}\times \partial e_i\times\bar{e}_{i+1}\times\dots\times\bar{e}_q$$ and $$\dim e_1\times e_2\times\dots \times e_q= \dim e_1+\dim e_2+\dots +\dim e_q.$$
	\end{lem}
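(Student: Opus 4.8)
The plan is to build an explicit characteristic map for the product cell out of characteristic maps for the factors, after correcting for the fact that a product of balls is a box rather than a round ball. Choose characteristic maps $f_i : D^{n_i} \to X_i$ for $e_i$, where $n_i = \dim e_i$, and set $n = n_1 + \dots + n_q$. Since a finite product of Hausdorff spaces is Hausdorff, $X_1 \times \dots \times X_q$ is Hausdorff, so Lemma \ref{l16} will be available for the product cell once a characteristic map is exhibited. Form the continuous product map $g = f_1 \times \dots \times f_q : D^{n_1} \times \dots \times D^{n_q} \to X_1 \times \dots \times X_q$.

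The one genuinely geometric ingredient, and the step I expect to be the main obstacle, is to produce a homeomorphism $h : D^n \to D^{n_1} \times \dots \times D^{n_q}$ that carries $\intr{D^n}$ onto the open box $\intr{D^{n_1}} \times \dots \times \intr{D^{n_q}}$ and $S^{n-1}$ onto its complement $\partial(D^{n_1} \times \dots \times D^{n_q})$ in the box. This holds because the product of balls is a compact convex subset of $\R^n$ with nonempty interior, and any such body is homeomorphic to $D^n$ by a homeomorphism sending the interior onto $\intr{D^n}$ and the topological boundary onto $S^{n-1}$; the radial homeomorphism built from its Minkowski gauge is one such. Granting this, I set $F = g \circ h : D^n \to X_1 \times \dots \times X_q$.

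I then check that $F$ is a characteristic map for $e_1 \times \dots \times e_q$. On $\intr{D^n}$, $h$ restricts to a homeomorphism onto the open box, and the restriction of $g$ to the open box is the product $f_1|_{\intr{D^{n_1}}} \times \dots \times f_q|_{\intr{D^{n_q}}}$ of the homeomorphisms $\intr{D^{n_i}} \to e_i$, hence a homeomorphism onto $e_1 \times \dots \times e_q$. Thus $F|_{\intr{D^n}}$ is a composite of homeomorphisms onto $e_1 \times \dots \times e_q$, which shows that $e_1 \times \dots \times e_q$ is an open cell with characteristic map $F$ and that $\dim(e_1 \times \dots \times e_q) = n = \dim e_1 + \dots + \dim e_q$, giving the dimension formula.

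For the boundary, Lemma \ref{l16}(3) applied to $F$ gives $\partial(e_1 \times \dots \times e_q) = F(S^{n-1}) = g\bigl(\partial(D^{n_1} \times \dots \times D^{n_q})\bigr)$. Writing the box boundary as $\bigcup_{i=1}^q D^{n_1} \times \dots \times S^{n_i - 1} \times \dots \times D^{n_q}$ and using that a product map sends a product to the product of the images and that images distribute over unions, each term maps under $g$ to $f_1(D^{n_1}) \times \dots \times f_i(S^{n_i-1}) \times \dots \times f_q(D^{n_q})$. Lemma \ref{l16}(1) identifies $f_j(D^{n_j})$ with $\bar e_j$ and Lemma \ref{l16}(3) identifies $f_i(S^{n_i-1})$ with $\partial e_i$, so the $i$-th term becomes $\bar e_1 \times \dots \times \partial e_i \times \dots \times \bar e_q$; taking the union over $i$ yields exactly the asserted formula. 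Everything past the box-to-ball homeomorphism is thus formal bookkeeping with Lemma \ref{l16}.
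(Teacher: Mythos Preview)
Your argument is correct. The paper's own proof reads in its entirety ``By induction and is straightforward,'' so you have supplied far more detail than the paper does; your direct construction for general $q$ (via the radial homeomorphism between $D^n$ and the product box) is exactly the kind of argument the paper is gesturing at, the only cosmetic difference being that you handle all factors at once rather than reducing to the case $q=2$ and inducting.
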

	\begin{proof}
By induction and is straightforward.
\end{proof}
\section {Stiefel and Grassmann manifolds}\label{s3}
Throughout this paper, $K$ is assumed to be either the field $\R$ of real numbers, the field $\C$ of complex numbers or the skew field $\H$ of quaternions. All vector spaces in this paper are vector spaces over $K$.\\
An inner product on a vector space $H$ is a map
$\langle\cdot,\cdot\rangle$
$$\begin{array}{rrll}
\langle\cdot,\cdot\rangle:&H\times H&\to&K\\
      &(x,y)&\mapsto&\langle x,y\rangle
			\end{array}$$
that satisfies the following axioms:\\ For all $x,y,z\in H$ and $\lambda\in K$
\begin{enumerate}
	\item linearity in the first argument:
	\begin{eqnarray*}
	\langle x+y,z\rangle&=&\langle x,z\rangle+\langle y,z\rangle\\
	\langle \lambda  x,y\rangle&=&\lambda \langle x,y\rangle 
	\end{eqnarray*}
	\item Conjugate symmetry:
	           $$\langle y,x\rangle =\overline{\langle x,y\rangle }$$
\item Positive definite:
$$\langle x,x\rangle \geq 0\mbox{ and }	\langle x,x\rangle =0\Rightarrow x=0$$
\end{enumerate}
					
A vector space on which there is defined an inner product is called an inner product vector space. An inner product vector space $H$ has a norm $\norm{\:}$ associated to its inner product given by
$\norm{x}=\langle x,x\rangle ^{\frac{1}{2}}$, $x\in H$. Given two finite dimensional inner product vector spaces $M$ and $N$, then any linear map $M\toby{u}N$ is continuous and the vector space  $L(M,N)$ of linear maps has a norm $\norm{\:}$ defined by $$\norm{u}=\sup_{\norm{x}\leq 1}\norm{u(x)}$$
A linear map $M\toby{u}N$ is said to be inner product preserving if
$$\langle u(x),u(y)\rangle = \langle x,y\rangle \mbox{ for all } x,y\in F.$$  
Such a map is a unit vector in the normed vector space $L(M,N)$.  
Define the Stiefel manifold $V(M,N)$ to be the topological subspace of $L(M,N)$ of inner product preserving linear maps from $M$ to $N$. $V(M,N)$ is a closed subset of the unit sphere of the finite dimensional normed vector space $L(M,N)$. It is therefore a compact metric space with distance $d$ given by 
$$d(u.v)=\norm{u-v}$$
Observe that if $P$ is another inner product vector space, then the composition induces a continuous map
$$V(N,P)\times V(M,N)\to V(M,P)$$
so that the finite dimensional inner product vector spaces form a topological category $V$, i.e. a category enriched over the cartesian monoidal category of topological spaces. Define an equivalence relation $\sim$ on the topological space $V(M,N)$ by 
$u\sim v$ if $u$ and $v$ have the same image.\\
Let $E$ be the vector space with basis $(e_n)_{n\in\N^*}$. A vector $x\in E$ may be written as 
$x=\sum_{n=1}^{\infty}x_ne_n$ with $x_n=0$ for $n$ sufficiently large. Define an inner product on $E$ by $$\langle x,y\rangle=\sum_{n=1}^{\infty}x_n\bar{y_n}.$$
Let $E_0$ be the zero subspace of $E$ and $E_n$ the inner product $K$-subspace of $E$ generated by $(e_k)_{1\leq k\leq n}$, $n\in\N^*$. For $n\in \N$ and $H\in V$, let $V(n,H)=V(E_n,H)$ and define $G(n,H)$ to be the quotient space of 
$V(n,H)$ by the equivalence relation $\sim$ defined above. $V(n,H)$ and $G(n,H)$ are the ordinary Stiefel manifold and Grassmann manifold over $K$.
Let
\begin{equation}\label{eq1} 
p(n,H):V(n,H)\rightarrow G(n,H) 
\end{equation} 
be the quotient map. Observe that any subspace of $H$ having dimension $n$ is the image of certain inner product preserving linear map from $E_n$ to $H$ and any two such maps are equivalent.
We may therefore view $G(n,H)$ as the set of $n$-dimensional subspaces of $H$ and $p(n,H)$ as the map that takes a linear map $u\in V(n,H)$ to its image and think of $G(n,H)$ as having the final topology induced by the map $p(n,H)$.
 For $n, m  \in \N$, define
$V(n,m)=V(n,E_m)$, $G(n,m)=G(n,E_m)$ and $p(n,m)=p(n,E_m)$.
The next proposition is a classical result which gives a sufficient condition for the quotient space to be Hausdorff \cite[Proposition 1.6 page 140]{D}.  
\begin{pro}\label{p1}
 Let $\mr$ be an equivalence relation on a topological space $X$ and assume that
\begin{enumerate}
	\item The quotient map $p:X\to X/\mr$ is open .
	\item The graph $\Gamma$ of the equivalence relation $\mr$ is closed in $X\times X$.
\end{enumerate}
Then the quotient space $X/\mr$ is Hausdorff.
\end{pro}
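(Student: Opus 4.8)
The plan is to verify the Hausdorff separation axiom directly in the quotient by exploiting the two hypotheses in tandem: the closedness of $\Gamma$ will provide a separation upstairs in $X\ti X$, and the openness of $p$ will allow me to transport that separation down to $X/\mr$. I would begin with two distinct points of $X/\mr$, write them as $p(x)$ and $p(y)$ for suitable representatives $x,y\in X$, and observe that $p(x)\neq p(y)$ is equivalent to $x\not\sim y$, i.e. to $(x,y)\notin\Gamma$.

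Next I would use hypothesis (2). Since $\Gamma$ is closed in $X\ti X$, its complement is open and contains the point $(x,y)$. By definition of the product topology there exist open sets $U,V$ in $X$ with $(x,y)\in U\ti V$ and $(U\ti V)\cap\Gamma=\emptyset$. The content of this last disjointness, unwound, is precisely that no element of $U$ is related to any element of $V$: for every $u\in U$ and $v\in V$ one has $(u,v)\notin\Gamma$, that is $u\not\sim v$.

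Then I would invoke hypothesis (1). Because $p$ is an open map, $p(U)$ and $p(V)$ are open neighborhoods of $p(x)$ and $p(y)$ respectively in $X/\mr$. It remains only to check that they are disjoint, and here the forward direction is the crux of the argument. Suppose for contradiction that some class $c$ lies in $p(U)\cap p(V)$; then $c=p(u)=p(v)$ for some $u\in U$ and $v\in V$, whence $u\sim v$, i.e. $(u,v)\in\Gamma$, contradicting $(U\ti V)\cap\Gamma=\emptyset$. Thus $p(U)\cap p(V)=\emptyset$.

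Having produced disjoint open neighborhoods of two arbitrary distinct points, I conclude that $X/\mr$ is Hausdorff. The only step requiring genuine care is the correct translation between the set-theoretic statement $(U\ti V)\cap\Gamma=\emptyset$ and the relational statement ``no point of $U$ is $\mr$-equivalent to a point of $V$''; once that dictionary is fixed, both the choice of $U,V$ and the disjointness of their images are immediate. I do not expect any serious obstacle, as neither hypothesis is used more than once and the argument is a clean two-move descent through $p$.
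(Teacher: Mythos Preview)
Your argument is correct and is precisely the standard proof of this classical fact. The paper does not supply its own proof of this proposition; it merely states the result and cites Dugundji's \emph{Topology} (Proposition 1.6, p.~140), whose proof is essentially the one you have written.
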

\begin{pro}\label{p2} The space $G(n,H)$ is compact Hausdorff,  $n \in \N$ and $H\in V$.
\end{pro}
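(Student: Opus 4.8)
The plan is to establish the two assertions separately. Compactness is immediate: since $V(n,H)$ is compact (being a closed subset of the unit sphere of the finite dimensional normed space $L(E_n,H)$) and $p(n,H):V(n,H)\ra G(n,H)$ is a continuous surjection, its image $G(n,H)$ is compact. For the Hausdorff property I would invoke Proposition \ref{p1} applied to the relation $\sim$ on $X=V(n,H)$, so that it suffices to check that $p(n,H)$ is an open map and that the graph $\Gamma$ of $\sim$ is closed in $V(n,H)\times V(n,H)$.

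The key observation is that $\sim$ is exactly the orbit relation of a group action. The subspace $V(E_n,E_n)\subset L(E_n,E_n)$ of inner product preserving maps is a group under composition: each such map preserves norms, hence is injective, hence bijective since $E_n$ is finite dimensional, and its inverse again preserves the inner product. It acts on $V(n,H)$ on the right by $u\cdot g=u\circ g$, the composite again preserving inner products, and this action is continuous by the composition map of the category $V$. For $u,v\in V(n,H)$ one has $\Im u=\Im v$ if and only if $v=u\circ g$ for some $g\in V(E_n,E_n)$: if the images coincide, then $g=u^{-1}\circ v$ (with $u^{-1}$ the inverse isometry of $u$ onto its image) lies in $V(E_n,E_n)$ and $v=u\circ g$; conversely, $g$ surjective forces $\Im(u\circ g)=\Im u$. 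Thus the $\sim$-classes are precisely the orbits of this action.

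From this, openness of $p(n,H)$ follows in the standard way for group quotients. For any open $U\subseteq V(n,H)$,
$$p(n,H)^{-1}\big(p(n,H)(U)\big)=\bigcup_{g\in V(E_n,E_n)}U\cdot g,$$
and since right translation by each fixed $g$ is a homeomorphism of $V(n,H)$ (with inverse right translation by $g^{-1}$), this union is open; by definition of the quotient topology, $p(n,H)(U)$ is open. For the graph I would argue by compactness rather than by hand: $\Gamma$ is the image of the continuous map
$$V(n,H)\times V(E_n,E_n)\ra V(n,H)\times V(n,H),\qquad (u,g)\longmapsto (u,u\circ g),$$
whose domain is compact and whose target is Hausdorff; hence $\Gamma$ is compact and therefore closed. (Equivalently, $\Im u=\Im v$ iff the orthogonal projections $uu^{*}$ and $vv^{*}$ onto the two images coincide, so $\Gamma$ is the zero set of the continuous map $(u,v)\mapsto uu^{*}-vv^{*}$.) With both hypotheses of Proposition \ref{p1} verified, $G(n,H)$ is Hausdorff.

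The only genuinely delicate point is the identification of $\sim$ with the orbit relation of $V(E_n,E_n)$ — in particular, checking that every inner product preserving map of $E_n$ is invertible with inverse again in $V(E_n,E_n)$, and that the common-image condition is equivalent to differing by such a $g$. Once this is in hand, both the openness of $p(n,H)$ and the closedness of $\Gamma$ are formal consequences of compactness and continuity of composition. A minor caution is the quaternionic case, where one must take the inverse and (if one uses the projection variant) the adjoint on the correct side so that $u^{-1}\circ v$ and $uu^{*}$ remain $K$-linear.
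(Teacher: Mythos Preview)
Your argument is correct and essentially identical to the paper's: both derive compactness from the surjection $p(n,H)$, then apply Proposition~\ref{p1} by writing $p(n,H)^{-1}(p(n,H)(O))=\bigcup_{g\in V(n,n)}Og$ for openness and realizing $\Gamma$ as the image of the compact set $V(n,H)\times V(n,n)$ under $(v,g)\mapsto(v,vg)$ for closedness. You are simply more explicit than the paper about why $V(n,n)$ is a group and why $\sim$ coincides with its orbit relation, and you add a pleasant alternative via the projection $uu^{*}$.
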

\begin{proof}
$G(n,H)$ is the image by the continuous map $p(n,H)$ of the compact space $V(n,H)$, thus $G(n,H)$ is compact. Let $O$ be open in $V(n,H)$.
$$p(n,H)^{-1}(p(n,H)(O))=\bigcup_{u\in V(n,n)}Ou$$
is open, therefore the quotient map $p(n,H)$ is open.
Let $\Gamma$ be the graph of the equivalence relation $\sim$.
$\Gamma$ is the image of the following continuous map
$$\begin{array}{ccc}
V(n,H)\times V(n,n)& \longrightarrow &V(n,H)\times V(n,H)\\
                  (v,u)& \longmapsto & (v,vu)
		\end{array}	
	$$
	$V(n,H)\times V(n,n)$ is compact, therefore $\Gamma$ is a compact subset of the Hausdorff space $V(n,H)\times V(n,H)$. It follows that $\Gamma$ is closed. By Proposition \ref{p1}, $G(n,H)$ is Hausdorff.
\end{proof}
\section{The cell structure for Grassmann manifolds}\label{s4}
In this section, we show that the method of Milnor and Stasheff for the construction of the cell structure for real Grassman manifolds \cite[Chapter 6]{MS} extends easily to the complex and quaternionic cases.
	
Define an elementary Schubert symbol of type $(n,m)$ to be a strictly increasing function $\{1,2,\dots,n\}\toby{\sigma}\{1,2,\dots,m\}$. The set of elementary Schubert symbols of type $(n,m)$ is denoted by $S(n,m)$.
There is an order relation on $S(n,m)$ given by
\begin{equation}\label{eq8}
\sigma\leq \tau  \iff  \sigma(k)\leq \tau(k), 1\leq k\leq n\end{equation}
Define a dimension function
$$\begin{array}{lrrl}
			d:&S(n,m)&\to&\N\\
			     &\sigma&\mapsto&d(\sigma)=\sum_{k=1}^n\sigma(k)-k
		\end{array}$$			
Let $\sigma,\tau\in S(n,m)$ with $\sigma <\tau$, then $d(\sigma)< d(\tau)$ so that $d$
is a strictly increasing function.

Let $H$ be finite dimentional inner product vector space of dimension $m$, $(h_k)_{1\leq k\leq m}$ an orthonormal basis of $H$, $H_k$ the subspace of $H$ generated by $(h_i)_{1\leq i\leq k}$, $1\leq k\leq m$. Define
\begin{equation}\label{eq2} 
V_{\sigma}(n,H)=\{u\in V(n,H)\suchthat u(E_k)\subset H_{\sigma(k)}\mbox{ and } \langle u(e_k) , h_{\sigma(k)}\rangle >0, 1\leq k\leq n\}
\end{equation}
and $V_{\sigma}(n,m)=V_{\sigma}(n,E_m)$.
The closure $\overline{V_{\sigma}(n,H)}$ is given by
\begin{equation}\label{eq3} 
\overline{V_{\sigma}(n,H)}=\{u\in V(n,H)\suchthat u(E_k)\subset H_{\sigma(k)} \mbox{ and }	\langle u(e_k) , h_{\sigma(k)}\rangle\geq 0, 1\leq k\leq n\}
\end{equation}
Let $\dim_{\R}K$ be the dimension of $K$ as an  $\R$-vector space. 
\begin{lem}\label{l1}
$V_{\sigma}(n,H)$ is an open cell in $V(n,H)$ of dimension $\dim_{\R}K.d(\sigma)$, $\sigma\in S(n,m)$.
\end{lem}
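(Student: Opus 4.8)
The plan is to prove the statement by induction on $n$, building an explicit characteristic map on a closed ball and then invoking Lemma \ref{l19}. Throughout I identify $u\in V(n,H)$ with the orthonormal frame $(v_1,\dots,v_n)$, $v_k=u(e_k)$, so that by (\ref{eq2}) membership in $V_{\sigma}(n,H)$ means $v_k\in H_{\sigma(k)}$ with real pivot $\langle v_k,h_{\sigma(k)}\rangle>0$ for all $k$. Writing $d_k=\sigma(k)-k\ge 0$ (nonnegative since $\sigma$ is strictly increasing), I aim to produce a characteristic map
$$f\colon D^{\dim_{\R}K\cdot d(\sigma)}\ra V(n,H),\qquad \dim_{\R}K\cdot d(\sigma)=\sum_{k=1}^n \dim_{\R}K\cdot d_k,$$
from the closed ball viewed as the product $D^{N'}\times D^{\dim_{\R}K\cdot d_n}$ (interior to interior, boundary to boundary), where $N'=\dim_{\R}K\cdot d(\sigma')$ and $\sigma'=\sigma|_{\{1,\dots,n-1\}}\in S(n-1,m)$. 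The base case $n=1$ is the direct parametrization $x\mapsto \sqrt{1-\norm{x}^2}\,h_{\sigma(1)}+L(x)$, where $L$ is a fixed linear isometry onto $H_{\sigma(1)-1}$.

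For the inductive step I assume a characteristic map $g\colon D^{N'}\ra V(n-1,H)$ for $V_{\sigma'}(n-1,H)$ carrying the extra feature, kept as part of the induction, that its components satisfy $v_i(t)\in H_{\sigma(i)}$ for every $t\in D^{N'}$; in particular $v_1(t),\dots,v_{n-1}(t)$ is an orthonormal family lying in $H_{\sigma(n)-1}$ for all $t$. The complement
$$C(t)=H_{\sigma(n)-1}\cap\{v_1(t),\dots,v_{n-1}(t)\}^{\perp}$$
then has constant real dimension $\dim_{\R}K\cdot d_n$ and varies continuously with $t$. I would set
$$f(t,x)=\Big(v_1(t),\dots,v_{n-1}(t),\ \sqrt{1-\norm{x}^2}\,h_{\sigma(n)}+L(t)(x)\Big),$$
where $L(t)\colon \R^{\dim_{\R}K\cdot d_n}\ra C(t)$ is a continuously varying linear isometry onto $C(t)$. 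Since $h_{\sigma(n)}\perp H_{\sigma(n)-1}\supseteq C(t)$ and $\norm{L(t)(x)}=\norm{x}$, the last vector is a unit vector orthogonal to $v_1(t),\dots,v_{n-1}(t)$, so $f$ lands in $V(n,H)$, its last component $v_n$ lies in $H_{\sigma(n)}$ with pivot $\sqrt{1-\norm{x}^2}$, and the property $v_k\in H_{\sigma(k)}$ persists.

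It then remains to verify the two hypotheses of Lemma \ref{l19}. On the interior the frame $g(t)$ lies in $V_{\sigma'}(n-1,H)$ (all first $n-1$ pivots positive) and $\norm{x}<1$ forces the last pivot positive, so $f$ maps the interior into $V_{\sigma}(n,H)$; conversely, given $u\in V_{\sigma}(n,H)$ one recovers $t$ from its first $n-1$ vectors by injectivity of $g$ on $\intr{D^{N'}}$, and then $x=L(t)^{-1}\big(v_n-\langle v_n,h_{\sigma(n)}\rangle h_{\sigma(n)}\big)$ uniquely, so $f(\intr{D^{N}})=V_{\sigma}(n,H)$ and $f$ is injective there. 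For the boundary, a point of $S^{N-1}$ has either $t\in S^{N'-1}$, whence $g(t)\notin V_{\sigma'}(n-1,H)$ by Lemma \ref{l16}(2) and hence one of the first $n-1$ pivots vanishes, or $\norm{x}=1$, whence the last pivot vanishes; either way the image has a vanishing pivot and lies outside $V_{\sigma}(n,H)=f(\intr{D^{N}})$, so $f(\intr{D^{N}})\cap f(S^{N-1})=\emptyset$. Lemma \ref{l19} then gives that $V_{\sigma}(n,H)$ is an open cell with characteristic map $f$, of dimension $N'+\dim_{\R}K\cdot d_n=\dim_{\R}K\cdot d(\sigma)$, closing the induction.

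The one genuinely delicate point is the existence of the continuously varying isometry $L(t)$ over the whole closed ball $D^{N'}$, its boundary included. A naive recipe, projecting the fixed vectors $h_1,\dots,h_{\sigma(n)-1}$ onto $C(t)$ and applying Gram--Schmidt, fails: on $\partial D^{N'}$ some pivots of $g(t)$ degenerate, the chosen spanning vectors may become linearly dependent, and the orthonormalization jumps. I would instead argue structurally: since $v_1(t),\dots,v_{n-1}(t)$ are orthonormal and contained in $H_{\sigma(n)-1}$ for all $t$, the map $t\mapsto C(t)$ is a real vector subbundle of constant rank $\dim_{\R}K\cdot d_n$ of the trivial bundle $D^{N'}\times H$, and a vector bundle over the contractible space $D^{N'}$ is trivial; a global orthonormal frame of this bundle is precisely the desired $L$. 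This is where the real work sits, the remaining verifications being formal consequences of Lemmas \ref{l16} and \ref{l19}.
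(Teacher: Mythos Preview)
Your argument is correct and follows the same inductive scheme as Milnor--Stasheff (which is what the paper invokes), but it diverges at precisely the point you flag as delicate: the continuous trivialization of the complement bundle $C(t)$ over the \emph{closed} ball. The paper, following \cite{MS}, handles this explicitly via the rotation operators $T(u,v)$: composing the rotations $T(v_i(t),h_{\sigma(i)})$ carries the moving frame $(v_1(t),\dots,v_{n-1}(t))$ to the fixed frame $(h_{\sigma(1)},\dots,h_{\sigma(n-1)})$, thereby identifying every $C(t)$ with one fixed subspace of $H_{\sigma(n)-1}$ and producing $L(t)$ by an explicit formula that stays continuous even when pivots degenerate on $\partial D^{N'}$ (since $\langle v_i(t),h_{\sigma(i)}\rangle\ge 0$ keeps the denominator $1+\langle u,v\rangle$ bounded away from zero). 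Your route instead appeals to the general fact that a vector bundle over a contractible base is trivial. That is perfectly legitimate and arguably cleaner conceptually, but it is less elementary---it imports homotopy invariance of pullbacks into an argument that \cite{MS} keeps entirely self-contained---and it yields only an existence statement for the characteristic map, whereas the rotation formula gives it explicitly, which matters if one later wants to compute attaching maps or cellular boundary operators.
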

\begin{proof}
First, observe that the inner product preserving map $f:E_m\longrightarrow H$ given $f(e_k)=h_k$ is a isomorphism in the category $V$, therefore it induces a homeomorphism $V(n,m)\longrightarrow V(n,H)$ which maps $V_{\sigma}(n,m)$ onto $V_{\sigma}(n,H)$, we may then assume without loss of generalities that $H=E_m$. 
In  \cite[Lemma 6.2.]{MS}, it is shown that for $K=\R$, $V_{\sigma}(n,m)$ is an open cell in $V(n,m)$ of dimension $d(\sigma)$. The method used there extends readily to the other cases of $K$. The only minor modification is due to the fact that the inner product is not symmetric when $K=\C$ or $K=\H$ and the formula of the rotation $T(u,v)$ defined in \cite [page 77]{MS} should be 
$$T(u,v)(x)=x-\frac{\langle x,u+v\rangle}{1+\langle u,v\rangle}(u+v)+2\langle x,u\rangle v.$$
where $u$ and $v$ are two unit vectors in $E_m$ with $\langle u,v\rangle>0$; ($\langle u,v\rangle $ is a positive real number).
\end{proof}	
\begin{definition}\label{d1}
Let $W$ be a finite dimensional vector space of dimension $m$.
\begin{enumerate}
	\item 
A  flag in  $W$ is a strictly increasing sequence
 $0=W_0\subset W_1\subset W_2\subset\dots\subset W_q=W$ of subspaces of $W$. If we write $d_k = \dim W_k$, then $0=d_0 \prec d_1\prec d_2\prec \dots \prec d_q=m$. The sequence $(d_0,d_1,\dots, d_q)$ is called the signature of the flag $(W_k)_{0\leq k\leq q}$.
\item A flag $(W_k)_{0\leq k\leq q}$ in $W$ is said to be complete if $q=m$, in which case $\dim W_k = k$, $0\leq k\leq m$ and the flag has signature $(0,1,\dots,m)$.
\item A vector space that is assigned a fixed flag (resp. a fixed complete flag) is said to be flagged (resp. completely flagged).
 \end{enumerate}  
\end{definition}
Let $W$ be a completely flagged finite dimensional vector space with dimension $m$. $(W_k)_{0\leq k\leq m}$ its fixed complete flag and $X$ an $n$-dimensional vector subspace of $W$. Define the Schubert function of $X$ to be the function
\begin{equation}\label{eq10} 
\begin{array}{rrcl}
\omega_X^W:&\{0,1,\dots,m\}&\to&\{0,1,\dots,n\}\\
    &k&\mapsto&\dim X\cap W_k
\end{array}
\end{equation}
Then
\begin{itemize}
\item $0=\omega_X^W(0)\leq \omega_X^W(1)\leq\dots \leq \omega_X^W(m)=n$, so that $\omega_X^W$ is increasing.
	\item $ \omega_X^W(k+1)\leq \omega_X^W(k)+1, \quad 0\leq k\leq m-1$.
	\item $\omega_X^W$is surjective.
\end{itemize}
Define the Schubert symbol of $X$ to be the elementary Schubert symbol
\begin{equation}\label{eq9} 
\begin{array}{rrcl}
\sigma_X^W:&\{1,2,\dots,n\}&\to&\{1,2,\dots,m\}\\
    &k&\mapsto&\min (\omega_X^W)^{-1}(k)
\end{array}
\end{equation}
	$\sigma_X^W$ is characterized by the following property
\begin{equation}\label{eq13}  \dim X\cap W_{i}\geq k  \iff i\geq \sigma_X^W(k),\quad 1\leq k\leq n.
\end{equation}
In particular, if we set $X_0=0$ and $X_k=X\cap W_{\sigma_X^W(k)}$, $1\leq k\leq n$,  then
\begin{equation}\label{eq11} 
0=X_0\subset X_1\subset\dots\subset X_n=X
\end{equation} 
is a complete flag in $X$ called the induced complete flag, so that a subspace of a completely flagged finite dimensional vector space is naturally a completely flagged vector space.
\begin{lem}\label{l3}
Let $Z$ be a finite dimensional completely flagged vector space, $Y$ a subspace of $Z$ and $X$ a nonzero subspace of $Y$. Then with the above notation, we have 
\begin{enumerate}
\item  $\omega_X^Z = \omega_X^Y \omega_Y^Z.$
\item  $\sigma_X^Z = \sigma_Y^Z \sigma_X^Y.$
\end{enumerate}
\end{lem}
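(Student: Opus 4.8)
The plan is to prove both identities by exploiting the threshold characterization \eqref{eq13} of the Schubert symbol together with the fact that, when $X\subset Y\subset Z$, the flag of $Z$ cuts out on $Y$ exactly the induced complete flag $(Y_j)$ of $Y$ from \eqref{eq11}. Throughout write $p=\dim Y$ and $n=\dim X$, so that $\omega_Y^Z$ runs on $\{0,\dots,m\}$ with values in $\{0,\dots,p\}$ and $\sigma_Y^Z$ runs on $\{1,\dots,p\}$ into $\{1,\dots,m\}$, while $\omega_X^Y,\sigma_X^Y$ go between $\{0,\dots,p\}$ and $\{0,\dots,n\}$; the composites in the statement then have the asserted domains and codomains.

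The first step, and the geometric heart of the argument, is to establish the set-level equality
$$Y\cap Z_i=Y_{\omega_Y^Z(i)},\qquad 0\le i\le m.$$
Writing $j=\omega_Y^Z(i)=\dim X\cap Z_i$ oops — $j=\dim Y\cap Z_i$, the characterization \eqref{eq13} applied to $Y$ in $Z$ gives $i\ge\sigma_Y^Z(j)$, hence $Z_{\sigma_Y^Z(j)}\subset Z_i$ and therefore $Y_j=Y\cap Z_{\sigma_Y^Z(j)}\subset Y\cap Z_i$. Since $\sigma_Y^Z(j)$ is the minimum of the (nonempty, by surjectivity of $\omega_Y^Z$) preimage $(\omega_Y^Z)^{-1}(j)$, one has $\dim Y_j=\omega_Y^Z(\sigma_Y^Z(j))=j=\dim(Y\cap Z_i)$, so this inclusion of equidimensional subspaces is an equality. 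Part (1) is then immediate: because $X\subset Y$ we get $X\cap Z_i=X\cap(Y\cap Z_i)=X\cap Y_{\omega_Y^Z(i)}$, and taking dimensions yields $\omega_X^Z(i)=\omega_X^Y(\omega_Y^Z(i))$ for every $i$, which is exactly $\omega_X^Z=\omega_X^Y\,\omega_Y^Z$.

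For part (2) I would argue purely formally from part (1) and three applications of the threshold property \eqref{eq13}. For a fixed $k\in\{1,\dots,n\}$ and any integer $i$,
$$i\ge\sigma_X^Z(k)\iff\dim X\cap Z_i\ge k\iff\omega_X^Y(\omega_Y^Z(i))\ge k\iff\omega_Y^Z(i)\ge\sigma_X^Y(k)\iff i\ge\sigma_Y^Z(\sigma_X^Y(k)),$$
where the second equivalence is part (1), the third is \eqref{eq13} for $X$ in $Y$, and the fourth is \eqref{eq13} for $Y$ in $Z$. Two increasing threshold conditions on $i$ that agree for every $i$ must have the same threshold, so $\sigma_X^Z(k)=\sigma_Y^Z(\sigma_X^Y(k))$, i.e. $\sigma_X^Z=\sigma_Y^Z\,\sigma_X^Y$.

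The only real obstacle is the collapse identity $Y\cap Z_i=Y_{\omega_Y^Z(i)}$ of the first step; everything afterward is bookkeeping. The delicate point is to confirm simultaneously that $\sigma_Y^Z(\omega_Y^Z(i))\le i$ and that $\dim Y_{\omega_Y^Z(i)}$ genuinely equals $\omega_Y^Z(i)$: both hinge on reading \eqref{eq13} and the surjectivity of $\omega_Y^Z$ correctly, after which the coincidence of dimensions forces the set equality and the rest follows.
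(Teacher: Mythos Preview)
Your proof is correct. For part (1) you and the paper proceed in the same spirit, but you make explicit the ``collapse identity'' $Y\cap Z_i=Y_{\omega_Y^Z(i)}$ that the paper silently invokes when it passes from $\omega_X^Y(\dim Y\cap Z_k)$ to $\dim X\cap Y\cap Z_k$; spelling this out is a genuine gain in clarity. For part (2) the arguments diverge: the paper computes directly with preimages, writing
\[
\sigma_X^Z(k)=\min(\omega_Y^Z)^{-1}\bigl((\omega_X^Y)^{-1}(k)\bigr)=\min(\omega_Y^Z)^{-1}\bigl(\min(\omega_X^Y)^{-1}(k)\bigr),
\]
a step that quietly uses the fact that $\omega_Y^Z$ is increasing, surjective, and jumps by at most $1$. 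Your route via three applications of the threshold characterization \eqref{eq13} sidesteps that subtlety entirely and is more self-explanatory, at the cost of a few extra lines. Both arguments are complete. (Do clean up the stray ``oops'' self-correction in your first paragraph before submitting.)
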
	
\begin{proof}
Let $p=\dim X$, $q=\dim Y$ and $r=\dim Z$.
\begin{enumerate}
\item For $k\in \{1,2,\dots,r\}$
\begin{eqnarray*}
 \omega_X^Y \omega_Y^Z(k)&=&\omega_X^Y(\dim Y\cap Z_k)\\
 &=&\dim X\cap Y\cap Z_k\\
 &=&\dim X\cap Z_k\\
 &=&\omega_X^Z(k) 
\end{eqnarray*}
We then have $\omega_X^Z = \omega_X^Y \omega_Y^Z.$
\item For $k\in \{1,2,\dots,p\}$
\begin{eqnarray*}
 \sigma_X^Z(k)&=&\min  (\omega_X^{Z })^{-1}(k) \\
 &=&\min(\omega_X^Y \omega_Y^Z)^{-1}(k)\\
 &=&\min (\omega_Y^{Z })^{-1}((\omega_X^{Y })^{-1}(k)) \\
 &=&\min (\omega_Y^{Z}) ^{-1}(\min (\omega_X^{Y })^{-1}(k))\\
 &=&\sigma_Y^Z \sigma_X^Y(k)
\end{eqnarray*}
We then have $\sigma_X^Z = \sigma_Y^Z \sigma_X^Y.$   
\end{enumerate}
\end{proof}
Let $H$ be a finite dimensional vector space with orthonormal basis $(h_k)_{1\leq k\leq m}$, $H_0=0$ and $H_k$ the subspace of $H$ generated by $(h_i)_{1\leq i\leq k}$, $1\leq k\leq m$. $(H_k)_{0\leq k\leq m}$ is a complete flag in $H$. Define 
$$G_{\sigma}(n,H)= \{X\in G(n,H)\suchthat\sigma_X^H=\sigma\},\quad \sigma \in S(n,m)$$
The following lemma is clearly an immediate consequence of this last definition. 
\begin{lem}\label{l2}The family  $(G_ {\sigma}(n,H))_{\sigma \in S(n,m)}$ is a partition of $G(n,H)$.
 \end{lem}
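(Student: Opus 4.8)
The claim is that $(G_\sigma(n,H))_{\sigma\in S(n,m)}$ is a partition of $G(n,H)$. The plan is to verify the two defining conditions of a partition: that every point lies in some block, and that the blocks are pairwise disjoint. Both will follow directly from the fact that the assignment $X\mapsto\sigma_X^H$ is a well-defined function $G(n,H)\to S(n,m)$, since the sets $G_\sigma(n,H)$ are precisely the fibers of this function.

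First I would check that each $G_\sigma(n,H)$ is nonempty, so that we genuinely have a partition indexed by all of $S(n,m)$ rather than a partition after discarding empty blocks. Given an arbitrary $\sigma\in S(n,m)$, I would exhibit an explicit $X\in G(n,H)$ with $\sigma_X^H=\sigma$; the natural candidate is the coordinate subspace $X=\mathrm{span}(h_{\sigma(1)},\dots,h_{\sigma(n)})$, for which one computes directly that $\dim X\cap H_i$ jumps by one exactly at the indices $i=\sigma(k)$, so that the characterizing property \eqref{eq13} holds and $\sigma_X^H=\sigma$. Next, to see that the union of the blocks is all of $G(n,H)$, I would note that for \emph{every} $X\in G(n,H)$ the Schubert symbol $\sigma_X^H$ is defined and is an element of $S(n,m)$: this is exactly the content of the discussion preceding \eqref{eq9}, where it is shown that $\omega_X^H$ is increasing, surjective, and has jumps of at most $1$, whence $\sigma_X^H=k\mapsto\min(\omega_X^H)^{-1}(k)$ is a strictly increasing map $\{1,\dots,n\}\to\{1,\dots,m\}$, i.e. an elementary Schubert symbol. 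Thus every $X$ lies in $G_{\sigma_X^H}(n,H)$.

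For pairwise disjointness, suppose $\sigma\neq\tau$ and $X\in G_\sigma(n,H)\cap G_\tau(n,H)$. Then by definition $\sigma=\sigma_X^H=\tau$, contradicting $\sigma\neq\tau$; hence distinct blocks are disjoint. Combining this with the covering property established above yields the partition.

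I do not expect any genuine obstacle here: the statement is labeled as an immediate consequence of the definition, and indeed the real work was already done in establishing that $\sigma_X^H$ is a well-defined element of $S(n,m)$ for every $X$ (the three displayed properties of $\omega_X^H$). If anything, the only point requiring a moment's care is the nonemptiness of each block, since partitions are sometimes stated without insisting every index be attained; I would include the coordinate-subspace computation above to confirm that the family is indexed exactly by $S(n,m)$ with no empty members.
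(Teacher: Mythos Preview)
Your proposal is correct and follows exactly the approach the paper intends: the paper states that the lemma is ``an immediate consequence of this last definition,'' i.e., the sets $G_\sigma(n,H)$ are by construction the fibers of the function $X\mapsto\sigma_X^H$, which is well-defined by the preceding discussion of $\omega_X^H$. Your explicit verification that each block is nonempty via the coordinate subspace $\mathrm{span}(h_{\sigma(1)},\dots,h_{\sigma(n)})$ is a welcome addition that the paper leaves implicit.
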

\begin{lem}\label{l4}
Let $\sigma\in S(n,m)$, then
\begin{enumerate}
\item $p(n,H)$ induces a bijection $p(n,H)_{/}:V_{\sigma}(n,H) \longrightarrow G_{\sigma}(n,H)$. 
\item Let $v\in \partial V_{\sigma}(n,H)$ and $Y=p(n,H)(v)$, then $\sigma_Y\prec\sigma$ where $\sigma_Y$ is the Schubert symbol of $Y$. In particular 
 $$p(n,H)(V_{\sigma}(n,H))\cap p(n,H)(\partial V_{\sigma}(n,H))=\emptyset .$$
\end{enumerate}
\end{lem}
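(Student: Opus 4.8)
The plan is to establish the three ingredients of part (1)---that $p(n,H)$ sends $V_{\sigma}(n,H)$ into $G_{\sigma}(n,H)$, that it is injective there, and that it is surjective onto $G_{\sigma}(n,H)$---and then to read off part (2) from the closure formula \eqref{eq3}. Throughout I will use that, for $u\in V(n,H)$, the vectors $u(e_1),\dots,u(e_n)$ form an orthonormal basis of $X=u(E_n)$, and that membership in $V_{\sigma}(n,H)$ forces $u(e_k)\in u(E_k)\subset H_{\sigma(k)}$ with a strictly positive $h_{\sigma(k)}$-coordinate; thus $u(e_k)$ has its top nonzero coordinate exactly at position $\sigma(k)$, the coordinate of a vector $w$ at position $j$ being $\langle w,h_j\rangle$.

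First I would show $\sigma_X^H=\sigma$ for $u\in V_{\sigma}(n,H)$ by verifying the characterization \eqref{eq13}, namely $\dim X\cap H_i\geq k\iff i\geq\sigma(k)$. The forward implication is immediate: when $i\geq\sigma(k)$ the independent vectors $u(e_1),\dots,u(e_k)$ all lie in $H_{\sigma(k)}\subset H_i$. For the converse I would argue that there is no cancellation at the top coordinate: writing $x=\sum_{\ell}c_{\ell}u(e_{\ell})$ with $\ell_0$ the largest index having $c_{\ell_0}\neq 0$, the $h_{\sigma(\ell_0)}$-coordinate of $x$ equals $c_{\ell_0}\langle u(e_{\ell_0}),h_{\sigma(\ell_0)}\rangle\neq 0$, since $\sigma$ is strictly increasing kills the contributions of the lower terms. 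Hence $x\in H_i$ with $i<\sigma(k)$ forces $\sigma(\ell_0)\leq i<\sigma(k)$, so $\ell_0\leq k-1$, so $X\cap H_i\subset\mathrm{span}(u(e_1),\dots,u(e_{k-1}))$ has dimension $<k$. This gives $p(n,H)(V_{\sigma}(n,H))\subset G_{\sigma}(n,H)$.

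For injectivity and surjectivity the key observation is that the induced flag is rigid: \eqref{eq13} gives $\dim X\cap H_{\sigma(k)}=k$, while $u(E_k)\subset X\cap H_{\sigma(k)}$ also has dimension $k$, so $X_k:=X\cap H_{\sigma(k)}=u(E_k)$ is determined by $X$ and $\sigma$ alone. Then $u(e_k)$ is the unique unit vector of the line $X_k\cap X_{k-1}^{\perp}$ with positive $h_{\sigma(k)}$-coordinate: indeed $X_{k-1}\subset H_{\sigma(k)-1}$ (as $\sigma(k-1)\leq\sigma(k)-1$) has vanishing $h_{\sigma(k)}$-coordinate, while $X_k\not\subset H_{\sigma(k)-1}$ by \eqref{eq13}, so this line carries a nonzero $h_{\sigma(k)}$-coordinate, and scaling a unit vector $w_0$ by the unit $\lambda=\overline{\langle w_0,h_{\sigma(k)}\rangle}/|\langle w_0,h_{\sigma(k)}\rangle|$ produces the unique admissible representative. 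This last phase normalization is the only place where the $K=\C,\H$ cases require separate checking (one verifies $\lambda\langle w_0,h_{\sigma(k)}\rangle=|\langle w_0,h_{\sigma(k)}\rangle|>0$ and $|\lambda|=1$ using $\overline{\mu}\mu=|\mu|^2$). Injectivity on $V_{\sigma}(n,H)$ then follows because each $u(e_k)$ is forced, and surjectivity follows by running the same recipe as a construction from any $X$ with $\sigma_X^H=\sigma$, checking that the orthonormal frame so produced lies in $V_{\sigma}(n,H)$ with $u(E_n)=X$.

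Finally, for part (2) I would invoke \eqref{eq3}: $v\in\overline{V_{\sigma}(n,H)}$ gives $v(E_k)\subset H_{\sigma(k)}$, whence $\dim Y\cap H_{\sigma(k)}\geq k$ and so $\sigma_Y(k)\leq\sigma(k)$ for all $k$ by \eqref{eq13}, i.e. $\sigma_Y\leq\sigma$. Since $v\notin V_{\sigma}(n,H)$, some index $k_0$ has $\langle v(e_{k_0}),h_{\sigma(k_0)}\rangle=0$, so $v(e_{k_0})\in H_{\sigma(k_0)-1}$; together with $v(e_j)\in H_{\sigma(j)}\subset H_{\sigma(k_0)-1}$ for $j<k_0$ this gives $\dim Y\cap H_{\sigma(k_0)-1}\geq k_0$, hence $\sigma_Y(k_0)\leq\sigma(k_0)-1<\sigma(k_0)$. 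Thus $\sigma_Y\prec\sigma$. The displayed disjointness is then automatic: by part (1) the set $p(n,H)(V_{\sigma}(n,H))$ consists of subspaces with Schubert symbol exactly $\sigma$, whereas every point of $p(n,H)(\partial V_{\sigma}(n,H))$ has symbol $\prec\sigma$, hence $\neq\sigma$. I expect the rigidity and uniqueness of the positively normalized frame (the injectivity--surjectivity core of part (1), including the $\C$ and $\H$ phase normalization) to be the main obstacle, while parts (1)(a) and (2) are comparatively direct consequences of \eqref{eq13}.
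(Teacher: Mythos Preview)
Your proposal is correct and follows essentially the same route as the paper's proof: both establish the bijection by characterizing $u(e_k)$ as the unique positively-normalized unit vector in the line $(X\cap H_{\sigma(k)})\cap(X\cap H_{\sigma(k)-1})^{\perp}$, and both derive part (2) directly from the closure formula \eqref{eq3} via \eqref{eq13}. The only minor tactical difference is in verifying $\sigma_X^H=\sigma$: you use a top-coordinate (row-echelon) argument, whereas the paper pulls back through the injection $u$ to compute $\dim u^{-1}(H_{\sigma(k)})=\dim E_k=k$ and $\dim u^{-1}(H_{\sigma(k)-1})=k-1$ directly.
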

\begin{proof}
\begin{enumerate}
\item 
\begin{itemize}
\item Let $u\in V_{\sigma}(n,H)$, $X=p(n,H)(u)$ and $k\succ 0.$\\ 
$ X\cap  H_{\sigma(k)} \subset \Im u$ and $u$ is injective. Thus
\begin{eqnarray*}
 \dim X\cap  H_{\sigma(k)}&=&\dim u^{-1}(X\cap  H_{\sigma(k)}) \\
 &=&\dim u^{-1}(H_{\sigma(k)})\\
 &=&\dim  E_k\\
 &=&  k
\end{eqnarray*}
 while
\begin{eqnarray*}
 \dim X\cap  H_{\sigma(k)-1}&=&\dim u^{-1}(X\cap  H_{\sigma(k)-1}) \\
 &=&\dim u^{-1}( H_{\sigma(k)-1})\\
 &=&\dim  E_{k-1}\\
 &=& k-1.
\end{eqnarray*} 
It follows that $\sigma$ is the Schubert symbol of $X$ and $$p(n,H)(V_{\sigma}(n,H)) \subset G_{\sigma}(n,H).$$
\item 
Let $X \in G_{\sigma}(n,H)$ and assume that $u \in V_{\sigma}(n,H)$ is such that\\ $p(n,H)(u)=X$. Let $k\in \{1,2,\dots,n\}$ 

$$\left\{
\begin{array}{l}
 u(E_{k})=X\cap H_{\sigma(k)}\\
u(E_{k-1})=X\cap H_{\sigma(k)-1}
\end{array}
\right.
$$
\begin{eqnarray*}
u(e_{k})&\in &u(E_{k}) \cap u(E_{k-1}^{\bot}) \\
 &=&(X\cap H_{\sigma(k)})\cap (X\cap H_{\sigma(k)-1})^{\bot} 
\end{eqnarray*} 
It follows that $u(e_{k})$ is the unique unit vector $x_{k}$ in $(X\cap H_{\sigma(k)})\cap(X\cap H_{\sigma (k)-1})^{\bot}$ such that $\langle x_{k}, h_{\sigma(k)}\rangle \succ 0$. This shows that the map $$p(n,H)_{/}:V_{\sigma}(n,H) \longrightarrow G_{\sigma}(n,H)$$ is injective. To show that it is surjective, we only need to construct an antecedent $u$ for a given element $X \in G_{\sigma}(n,H)$ as suggested by the previous argument.  

\end{itemize}
\item
\begin{itemize}
\item Recall that 
$$V_{\sigma}(n,H)=\{u\in V(n,H)\suchthat u(E_k)\subset H_{\sigma(k)}\mbox{ and } \langle u(e_k),h_{\sigma(k)}\rangle >0, 1\leq k\leq m\}$$
$$\overline{V_{\sigma}(n,H)}=\{u\in V(n,H)\suchthat u(E_k)\subset H_{\sigma(k)}\mbox{ and } \langle u(e_k),h_{\sigma(k)}\rangle\geq 0, 1\leq k\leq n\}$$
$v\in \partial V_{\sigma}(n,H)=\overline{V_{\sigma}(n,H)}-V_{\sigma}(n,H)$. Therefore
$$\left\{
\begin{array}{l}
 v(E_k)\subset H_{\sigma(k)}, 1\leq k\leq n\\
\exists k_0\in\{1,2,\dots,n\} \mbox{ such that }  v(e_{k_0})\in H_{\sigma(k_0)-1}
\end{array}
\right.
$$
It follows that
$$\left\{
\begin{array}{l}
 v(E_k)\subset Y\cap H_{\sigma(k)}, 1\leq k\leq n\\
v(E_{k_0})\subset Y\cap H_{\sigma(k_0)-1}
\end{array}
\right.
$$
and
$$\left\{
\begin{array}{l}
 \dim Y\cap  H_{\sigma(k)}\geq k, 1\leq k\leq n\\
\dim Y\cap  H_{\sigma(k_0)-1}\geq k_0
\end{array}
\right.
$$
by (\ref{eq13})
$$\left\{
\begin{array}{l}
 \sigma_Y(k)\leq \sigma(k), 1\leq k\leq n\\
\sigma_Y(k_0) < \sigma(k_0)
\end{array}
\right.
$$
i.e. $\sigma_Y < \sigma$          
\item
If $X\in p(n,H)(V_{\sigma}(n,H))$, then $\sigma_X = \sigma >\sigma_Y$. Therefore $X\neq Y$ and $p(n,H)(V_{\sigma}(n,H))\cap p(n,H)(\partial V_{\sigma}(n,H))=\emptyset$.
\end{itemize} 
\end{enumerate}
\end{proof}
\begin{lem}\label{l5}
Let $\sigma\in S(n,m)$, then
\begin{enumerate}
\item $G_{\sigma}(n,H)$ is an open cell in $G(n,H)$ with dimension $\dim_{\R}K.d(\sigma)$. 
\item $\partial G_{\sigma}(n,H)=p(n,H)(\partial V_{\sigma}(n,H))$.
\item Let $Y\in \partial G_{\sigma}(n,H)$ and $\sigma_Y$ the Schubert symbol of $Y$, then $\sigma_Y\prec\sigma$. In particular 
$d(\sigma_Y) \prec d(\sigma)$. 
\end{enumerate}
 \end{lem}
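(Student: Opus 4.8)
The plan is to read all three assertions off Lemma \ref{l20}, applied to the continuous quotient map $p(n,H)\colon V(n,H)\longrightarrow G(n,H)$ together with the open cell $e=V_{\sigma}(n,H)$ supplied by Lemma \ref{l1}. Both $V(n,H)$ (a compact metric space) and $G(n,H)$ (Hausdorff by Proposition \ref{p2}) are Hausdorff, so the ambient hypotheses of Lemma \ref{l20} are met, and it remains only to check the two conditions on the restriction of $p(n,H)$ to $e$.

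First I would verify the injectivity and disjointness hypotheses. Lemma \ref{l4}(1) asserts that $p(n,H)$ restricts to a bijection $V_{\sigma}(n,H)\to G_{\sigma}(n,H)$; in particular $p(n,H)_{|V_{\sigma}(n,H)}$ is injective and its image is exactly $G_{\sigma}(n,H)$. Lemma \ref{l4}(2) gives the disjointness $p(n,H)(V_{\sigma}(n,H))\cap p(n,H)(\partial V_{\sigma}(n,H))=\emptyset$. These are precisely hypotheses (1) and (2) of Lemma \ref{l20}. Applying that lemma, $p(n,H)(V_{\sigma}(n,H))=G_{\sigma}(n,H)$ is an open cell in $G(n,H)$ with $\dim G_{\sigma}(n,H)=\dim V_{\sigma}(n,H)=\dim_{\R}K\cdot d(\sigma)$, which is part (1), and $\partial G_{\sigma}(n,H)=p(n,H)(\partial V_{\sigma}(n,H))$, which is part (2).

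For part (3), I would take $Y\in\partial G_{\sigma}(n,H)$ and use part (2) to write $Y=p(n,H)(v)$ for some $v\in\partial V_{\sigma}(n,H)$. Lemma \ref{l4}(2) then yields $\sigma_Y\prec\sigma$, and since $d$ is strictly increasing (as observed immediately after its definition) this forces $d(\sigma_Y)\prec d(\sigma)$.

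I do not expect a genuine obstacle here, since the substantive work --- identifying $V_{\sigma}(n,H)$ as a cell, controlling the restriction of $p(n,H)$, and tracking the Schubert symbol across the boundary --- has already been carried out in Lemmas \ref{l1} and \ref{l4}. The only point requiring a little care is to confirm that $p(n,H)(V_{\sigma}(n,H))$ equals $G_{\sigma}(n,H)$ on the nose, so that the cell produced by Lemma \ref{l20} is exactly the set named in the statement; this is precisely the content of the bijection in Lemma \ref{l4}(1).
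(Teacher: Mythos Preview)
Your proposal is correct and mirrors the paper's own argument exactly: the paper records Lemma \ref{l5} as an immediate consequence of Lemma \ref{l20}, Lemma \ref{l1} and Lemma \ref{l4}, which is precisely the derivation you give. Your added remarks checking the Hausdorff hypotheses and the identification $p(n,H)(V_{\sigma}(n,H))=G_{\sigma}(n,H)$ make the deduction fully explicit.
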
	
\begin{proof}
Immediate consequence of Lemma \ref{l20}, Lemma \ref{l1} and lemma \ref{l4}.
\end{proof}
%
%
%
%
%
%
%
To sumarize, one has the following 
\begin{thm}\label{t1}
$G(n,H)$ has a finite CW-complex structure with open cells
$G_{\sigma}(n,H)$, $\sigma\in S(n,m).$
\end{thm}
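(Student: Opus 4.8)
The plan is to verify the two hypotheses of Lemma \ref{l22} for the partition $(G_{\sigma}(n,H))_{\sigma\in S(n,m)}$ and then invoke that lemma directly. First, $G(n,H)$ is Hausdorff by Proposition \ref{p2}, and the index set $S(n,m)$ is finite since there are only finitely many strictly increasing functions $\{1,2,\dots,n\}\to\{1,2,\dots,m\}$. By Lemma \ref{l2} the family $(G_{\sigma}(n,H))_{\sigma\in S(n,m)}$ is genuinely a partition of $G(n,H)$, and by Lemma \ref{l5}(1) each block $G_{\sigma}(n,H)$ is an open cell of dimension $\dim_{\R}K\cdot d(\sigma)$. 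This gives hypothesis (1) of Lemma \ref{l22}: a finite partition of a Hausdorff space into open cells.

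For hypothesis (2), I would take an arbitrary cell $e=G_{\sigma}(n,H)$ of the partition and a point $x\in\partial e$. Writing $x=Y$ as the corresponding $n$-dimensional subspace of $H$ and letting $\sigma_Y$ denote its Schubert symbol, Lemma \ref{l2} shows that $Y$ lies in exactly one block of the partition, namely $G_{\sigma_Y}(n,H)$. By Lemma \ref{l5}(3) we have $\sigma_Y\prec\sigma$, hence $d(\sigma_Y)\prec d(\sigma)$; multiplying by the positive integer $\dim_{\R}K$ and using the dimension formula of Lemma \ref{l5}(1) yields $\dim G_{\sigma_Y}(n,H)=\dim_{\R}K\cdot d(\sigma_Y)<\dim_{\R}K\cdot d(\sigma)=\dim G_{\sigma}(n,H)$. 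Thus the unique cell of the partition containing $x$ has strictly smaller dimension than $e$, which is precisely hypothesis (2).

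With both hypotheses in place, Lemma \ref{l22} immediately gives that $G(n,H)$ is a CW-complex with open cells $G_{\sigma}(n,H)$, and the finiteness of $S(n,m)$ makes it a \emph{finite} CW-complex. I expect no serious obstacle here: all the substantive work, namely that each $G_{\sigma}(n,H)$ is a cell, the description of its boundary, and the strict drop of the Schubert symbol along boundaries, has already been carried out in Lemmas \ref{l1}--\ref{l5}, so the theorem is essentially an assembly of those results through the abstract criterion of Lemma \ref{l22}. The only point deserving a moment of care is confirming that the inequality $\sigma_Y\prec\sigma$ forces a strict drop in dimension; this is guaranteed because the dimension function $d$ is strictly monotone (as noted immediately after its definition) and $\dim_{\R}K\ge 1$.
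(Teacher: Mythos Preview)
Your proposal is correct and follows exactly the paper's approach: the paper's own proof is the single line ``Immediate consequence of Lemma \ref{l22}, Lemma \ref{l2} and Lemma \ref{l5},'' and you have simply unpacked how those three lemmas feed into the hypotheses of Lemma \ref{l22}. The only extra ingredient you make explicit is the Hausdorffness from Proposition \ref{p2} and the finiteness of $S(n,m)$, both of which are tacit in the paper's one-line proof.
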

\begin{proof}
Immediate consequence of Lemma \ref{l22}, Lemma \ref{l2} and lemma \ref{l5}.

\end{proof}
\begin{cor}\label{c1}
$G(n,H)$ is a topological manifold of dimension $n(m-n).\dim _{\R}K$.	
\end{cor}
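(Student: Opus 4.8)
The plan is to read the manifold structure off the cell decomposition of Theorem \ref{t1} together with the homogeneity of $G(n,H)$ under its isometry group. First I would isolate the maximal Schubert symbol $\sigma_{\max}\in S(n,m)$ given by $\sigma_{\max}(k)=m-n+k$; it is the top element of the poset $(S(n,m),\leq)$ of (\ref{eq8}), and $d(\sigma_{\max})=\sum_{k=1}^{n}\bigl(\sigma_{\max}(k)-k\bigr)=n(m-n)$. Hence by Lemma \ref{l5} the cell $G_{\sigma_{\max}}(n,H)$ is an open cell of dimension $n(m-n)\cdot\dim_{\R}K$, and as such it is homeomorphic to $\intr{D^{\,d}}$, i.e. to $\R^{\,n(m-n)\cdot\dim_{\R}K}$.

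Next I would show that this top cell is an \emph{open subset} of $G(n,H)$. By Lemma \ref{l5}(3) the boundary of each cell lies in cells of strictly smaller Schubert symbol, so $\partial G_{\sigma}(n,H)\subset\bigcup_{\tau\prec\sigma}G_{\tau}(n,H)$ for every $\sigma$; since $\sigma_{\max}$ is the maximum of $S(n,m)$, every such $\tau$ satisfies $\tau\prec\sigma_{\max}$, whence $\partial G_{\sigma}(n,H)\subset A$ for the complement $A=\bigcup_{\sigma\neq\sigma_{\max}}G_{\sigma}(n,H)$. The partition being finite (Lemma \ref{l2}), $\overline{A}=\bigcup_{\sigma\neq\sigma_{\max}}\overline{G_{\sigma}(n,H)}=\bigcup_{\sigma\neq\sigma_{\max}}\bigl(G_{\sigma}(n,H)\cup\partial G_{\sigma}(n,H)\bigr)\subset A$, so $A$ is closed and $G_{\sigma_{\max}}(n,H)$ is open in $G(n,H)$. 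Thus every point of the top cell admits an open Euclidean neighbourhood of the required dimension.

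To propagate this to an arbitrary point I would use the action of the isometry group $V(H,H)$ on $G(n,H)$ by $g\cdot X=g(X)$. This action is induced by post-composition $V(H,H)\times V(n,H)\to V(n,H)$, $(g,u)\mapsto g\circ u$, which is continuous and descends through the quotient $p(n,H)$, since applying the injective $g$ preserves equality of images; for fixed $g$ it is a homeomorphism with inverse $g^{-1}\cdot(-)$. As any $n$-plane can be carried onto any other by an isometry, the action is transitive. Given $X\in G(n,H)$, choosing $g$ with $g(X_0)=X$ for some $X_0\in G_{\sigma_{\max}}(n,H)$ produces the open neighbourhood $g\bigl(G_{\sigma_{\max}}(n,H)\bigr)$ of $X$, homeomorphic to $\R^{\,n(m-n)\cdot\dim_{\R}K}$. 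Hence $G(n,H)$ is locally Euclidean of that dimension; being compact it is covered by finitely many such charts and is therefore second countable, and it is Hausdorff by Proposition \ref{p2}, so it is a topological manifold of dimension $n(m-n)\cdot\dim_{\R}K$.

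I expect the main obstacle to be the openness of the top cell in the second step: one must verify that deleting $G_{\sigma_{\max}}(n,H)$ leaves a closed set, which is precisely the assertion that the remaining cells form a subcomplex, and this rests entirely on the strict order drop along boundaries recorded in Lemma \ref{l5}(3). The transitivity and continuity of the isometry action are then routine. A fully equivalent alternative, should one prefer to avoid homogeneity, is to build the graph charts $U_{X_0}=\{X\in G(n,H)\suchthat X\cap X_0^{\perp}=0\}\cong L(X_0,X_0^{\perp})$ and check directly that they are homeomorphisms onto open sets of real dimension $n(m-n)\cdot\dim_{\R}K$.
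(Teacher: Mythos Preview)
Your argument is correct and follows the same idea as the paper: the top Schubert cell $G_{\sigma_{\max}}(n,H)$ is open of dimension $n(m-n)\cdot\dim_{\R}K$, and homogeneity carries this Euclidean chart to every point. The only cosmetic difference is that the paper, instead of invoking the $V(H,H)$-action explicitly, simply re-chooses the orthonormal basis $(h_k)$ so that any given $X_0$ is spanned by $h_{m-n+1},\dots,h_m$ and hence lies in the top cell for \emph{that} basis---which is of course the same move in disguise; your treatment is a bit more explicit in justifying why the top cell is open.
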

The proof is similar but not identical to the one given in \cite [page 31] {H2}.
\begin{proof}
$G(n,H)$ is Hausdorff. Let $X_0\in G(n,H)$ and choose $(h_1,h_2,\dots,h_m)$ to be a basis of $H$ such that
$(h_{m-n+1},\dots,h_{m-1},h_m)$ is a basis of $X_0$. Define $u_0\in V(n,H)$ by 
$u_0(e_k)=h_{m-n+k}, 1\leq k\leq n$. $p(n,H)(u_0)=X_0$, $u_0\in V_{\sigma}(n,H)$ where $\sigma \in S(n,m)$ is given by $\sigma(k)=m-n+k$. By Lemma \ref{l4}, $X_0\in G_{\sigma}(n,H)$.
$G_{\sigma}(n,H)$ is an open cell in $G(n,H)$ of top dimension, thus $G_{\sigma}(n,H)$ is an open neighborhood of $X_0$.
\begin{eqnarray*}
\dim G_{\sigma}(n,H)&=&\dim_{\R}K.d(\sigma)\\
                           &=&\dim_{\R}K.n(m-n).
\end{eqnarray*}
It follows that $G(n,H)$ is a topological manifold of 
dimension $\dim_{\R}K.n(m-n)$.
\end{proof}		
\section{ Schubert cells in a generalized Stiefel manifold}\label{s5}
Let $H$ be a finite dimensional inner product vector space of dimension $n$. $(n_k)_{1\leq k\leq q}$ a sequence of integers such that $1 \leq n_1 \prec n_2 \dots \prec n_{q} \prec n$.
Define the generalized Stiefel manifold $V(n_1,\dots,n_q,H)$ to be the subspace of $\prod^{q}_{k=1}V(n_k,H)$ given by
\begin{equation}\label{eq4} 
V(n_1,\dots,n_q,H)=
\{{(v_k)_{1\leq k\leq q }}\in \prod^{q}_{k=1}V(n_k,H) \suchthat
\Im v_1\subset \Im v_2\subset\dots\subset \Im v_{q}\}
\end{equation}
$V(n_1,\dots,n_q,H)$ is a subspace of a Haudorff space, it is then Hausdorff. Let
$$\begin{array}{rccc}
\psi(n_1,\dots,n_q,H):&\prod^{q-1}_{k=1}V(n_k,n_{k+1})\times V(n_q,H) & \longrightarrow & \prod^{q}_{k=1}V(n_k,H)\\
               &(u_k)_{1\leq k\leq q} &\mapsto & (u_{q}\dots  u_{k+1} u_{k})_{1\leq k\leq q} 
\end{array}$$
$\psi(n_1,\dots,n_q,H)$ is continuous injective map with image $V(n_1,\dots,n_q,H)$. \\$\prod^{q-1}_{k=1}V(n_k,n_{k+1})\times V(n_q,H)$ is compact, therefore $V(n_1,\dots,n_q,H)$ is compact Hausdorff and
$\psi(n_1,\dots,n_q,H)$ induces a homeomorphism of\\ $\prod^{q-1}_{k=1}V(n_k,n_{k+1})\times V(n_q,H)$ onto $V(n_1,\dots,n_q,H)$.\\

Define
\begin{equation}\label{eq19} 
S(n_1,n_2,\dots,n_q,n)=S(n_1,n_2)\times S(n_2,n_3)\times\dots\times S(n_{q},n)
\end{equation}
An element of $S(n_1,n_2,\dots,n_q,n)$ is called a general Schubert symbol of type
$(n_1,n_2,\dots,n_q,n)$. It is then a finite sequence $\sigma=(\sigma_1,\sigma_2,\dots,\sigma_{q})$
of elementary Schubert symbols.
The order relation $\leq$ on the elementary Schubert symbols defined by (\ref{eq8}) induces an order relation $\leq$ on $S(n_1,n_2,\dots,n_q,n)$ given by
\begin{equation}\label{eq14}  
\sigma=(\sigma_1,\sigma_2,\dots,\sigma_{q})\leq\tau=(\tau_1,\tau_2\dots,\tau_{q}) \iff  \sigma_i\leq\tau_i, 1\leq i\leq q
\end{equation} 
Let 
$$\begin{array}{rccl}
d:&S(n_1,n_2,\dots,n_q,n)&\to&\N\\
    &\sigma=(\sigma_1,\sigma_2,\dots,\sigma_{q})&\mapsto&d(\sigma)=\sum_{i=1}^{q}d(\sigma_i)
		\end{array}$$
then
$$d(\sigma)=\sum_{i=1}^{q}d(\sigma_i)=\sum^{q}_{i=1}\sum^{n_i}_{k=1}\sigma_i(k)-k$$
If $\sigma <\tau$, then $d(\sigma)<d(\tau)$ so that $d$ is a strictly increasing function.\\
\begin{lem}\label{l23}
Let $\sigma=(\sigma_1,\sigma_2,\dots,\sigma_q)\in S(n_1,n_2,\dots,n_q,n)$ then
\begin{eqnarray*}
d(\sigma)&=&\sum_{k=1}^{n_1}\sigma_q\dots\sigma_2\sigma_1(k)-k +\sum_{k\notin Im \sigma_1}\sigma_q\dots\sigma_2(k)-k +\dots \\
            &&+ \sum_{k\notin Im\sigma_{q-2}}\sigma_q\sigma_{q-1}(k)-k +\sum_{k\notin Im\sigma_{q-1}}\sigma_q(k)-k.\\
	\end{eqnarray*}
\end{lem}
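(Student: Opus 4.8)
The plan is to establish the identity by a direct telescoping computation rather than by an induction on $q$, although an induction would also work. Throughout I abbreviate the partial compositions by $\tau_j=\sigma_q\sigma_{q-1}\cdots\sigma_j$ for $1\leq j\leq q$, viewed as a strictly increasing map $\{1,\dots,n_j\}\to\{1,\dots,n\}$ (with the convention $n_{q+1}=n$). The single structural fact that drives everything is the recursion $\tau_{j-1}=\tau_j\circ\sigma_{j-1}$, valid for $2\leq j\leq q$, together with $\tau_q=\sigma_q$. With this notation the right-hand side to be proved is
$$R=\sum_{k=1}^{n_1}\bigl(\tau_1(k)-k\bigr)+\sum_{j=2}^{q}\ \sum_{k\notin\Im\sigma_{j-1}}\bigl(\tau_j(k)-k\bigr),$$
where in the inner sum $k$ ranges over $\{1,\dots,n_j\}\setminus\Im\sigma_{j-1}$.

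First I would rewrite each inner sum by completing it to a full sum over $\{1,\dots,n_j\}$. Since $\sigma_{j-1}$ is strictly increasing, $\{1,\dots,n_j\}$ is the disjoint union of $\Im\sigma_{j-1}$ and its complement, so
$$\sum_{k\notin\Im\sigma_{j-1}}\bigl(\tau_j(k)-k\bigr)=\sum_{k=1}^{n_j}\bigl(\tau_j(k)-k\bigr)-\sum_{k\in\Im\sigma_{j-1}}\bigl(\tau_j(k)-k\bigr).$$
The crucial step is to reparametrize the subtracted sum by $k=\sigma_{j-1}(l)$, $l\in\{1,\dots,n_{j-1}\}$, and to use $\tau_j(\sigma_{j-1}(l))=\tau_{j-1}(l)$; this turns it into $\sum_{l=1}^{n_{j-1}}\bigl(\tau_{j-1}(l)-\sigma_{j-1}(l)\bigr)$. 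Substituting, and writing $S_j=\sum_{k=1}^{n_j}(\tau_j(k)-k)$, the whole right-hand side becomes
$$R=\sum_{j=1}^{q}S_j-\sum_{j=2}^{q}\sum_{l=1}^{n_{j-1}}\bigl(\tau_{j-1}(l)-\sigma_{j-1}(l)\bigr).$$

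Then I would reindex the second double sum by $i=j-1$ and combine it term-by-term with $\sum_{j=1}^q S_j=\sum_{i=1}^{q}\sum_{l=1}^{n_i}(\tau_i(l)-l)$. The terms $\tau_i(l)$ cancel in pairs for each $1\leq i\leq q-1$, leaving $\sum_{l=1}^{n_i}(\sigma_i(l)-l)$; the surviving $i=q$ block is $\sum_{l=1}^{n_q}(\tau_q(l)-l)=\sum_{l=1}^{n_q}(\sigma_q(l)-l)$ because $\tau_q=\sigma_q$. Collecting everything gives $R=\sum_{i=1}^{q}\sum_{l=1}^{n_i}(\sigma_i(l)-l)=\sum_{i=1}^{q}d(\sigma_i)=d(\sigma)$, which is the assertion.

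The computation is elementary and has no real obstacle beyond careful bookkeeping with the index sets; the one place that must be handled precisely is the reparametrization $k=\sigma_{j-1}(l)$ on $\Im\sigma_{j-1}$ and the attendant identity $\tau_j\circ\sigma_{j-1}=\tau_{j-1}$, since this is exactly what makes the composition terms telescope away. I expect that checking the case $q=2$, where the two $\sigma_2\sigma_1$ terms cancel against each other, serves as a useful sanity check before writing out the general manipulation.
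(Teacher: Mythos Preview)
Your argument is correct. The key manipulation---splitting the sum over $\{1,\dots,n_j\}$ into the part indexed by $\Im\sigma_{j-1}$ and its complement, then reparametrizing the image part via $k=\sigma_{j-1}(l)$ and using $\tau_j\circ\sigma_{j-1}=\tau_{j-1}$---is exactly the device the paper uses. The difference is purely organizational: the paper proves the identity by induction on $q$, carrying out this splitting once (on the outermost factor) at each step, whereas you perform the splitting for every $j$ simultaneously and then observe that the resulting $\tau_i$-terms telescope. Your direct computation is arguably tidier and makes the cancellation visible all at once; the paper's inductive version has the minor advantage that the base case $q=2$ already displays the whole mechanism and the inductive step is then a one-line remark. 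Neither approach requires anything the other does not.
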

\begin{proof}
By Induction, for $q=2$
\begin{eqnarray*}
d(\sigma)&=&\sum_{k=1}^{n_1}\sigma_1(k)-k+\sum_{k=1}^{n_2}\sigma_2(k)-k\\
         &=&\sum_{k=1}^{n_1}\sigma_1(k)-k+\sum_{k=1}^{n_1}\sigma_2\sigma_1(k)-\sigma_1(k) 
				  +\sum_{k\notin Im \sigma_1}\sigma_2(k)-k\\
				&=&	\sum_{k=1}^{n_1}\sigma_2\sigma_1(k)-k	+ \sum_{k\notin Im \sigma_1}\sigma_2(k)-k.
\end{eqnarray*}
A similar argument shows that the formula is true for $q+1$ if it holds for $q$.
\end{proof}
Let $(h_i)_{1\leq i\leq n}$ an orthonormal basis of $H$.
For $ \sigma=(\sigma_1,\sigma_2,\dots,\sigma_q) \in S(n_1,n_2,\dots,n_q,n)$, define $V_{\sigma}(n_1,\dots,n_q,H)$ to be the subset of  $V(n_1,\dots,n_q,H)$ given by
\begin{equation}\label{eq21}  
V_{\sigma}(n_1,\dots,n_q,H)=\psi(n_1,\dots,n_q,H)(\prod^{q-1}_{k=1} V_{\sigma_k}(n_k,n_{k+1})\times V_{\sigma_q}(n_q,H))
\end{equation}  
\begin{lem}\label{l7}
Let $ \sigma=(\sigma_1,\sigma_2,\dots,\sigma_q) \in S(\sigma_1,\sigma_2,\dots,\sigma_q,n)$, then
$V_{\sigma}(n_1,\dots,n_q,H)$ is an open cell in $V(n_1,\dots,n_q,H)$ of dimension $\dim_{\R}K.d(\sigma)$ and boundary\\ $\partial V_{\sigma}(n_1,\dots,n_q,H)$ is equal to
 $$\bigcup^{q}_{k=1} \psi(n_1,\dots,n_q,H) (\overline{V_{\sigma_1}(n_1,n_2)}\times \dots \times \partial V_{\sigma_k}(n_k,n_{k+1})\times \dots \times \overline{V_{\sigma_{q}}(n_{q},H)})$$
\end{lem}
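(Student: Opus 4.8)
The strategy is to transport everything through the homeomorphism $\psi=\psi(n_1,\dots,n_q,H)$ and reduce the statement to the product structure already understood. Recall that $\psi$ is a homeomorphism from $\prod^{q-1}_{k=1}V(n_k,n_{k+1})\times V(n_q,H)$ onto $V(n_1,\dots,n_q,H)$, and that $V_\sigma(n_1,\dots,n_q,H)$ is defined as the $\psi$-image of the product box $P_\sigma:=\prod^{q-1}_{k=1}V_{\sigma_k}(n_k,n_{k+1})\times V_{\sigma_q}(n_q,H)$. By Lemma \ref{l1}, each factor $V_{\sigma_k}(n_k,n_{k+1})$ (and $V_{\sigma_q}(n_q,H)$) is an open cell of dimension $\dim_{\R}K\cdot d(\sigma_k)$.

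First I would apply Lemma \ref{l21} to the box $P_\sigma$. This immediately yields that $P_\sigma$ is an open cell in the product space $\prod^{q-1}_{k=1}V(n_k,n_{k+1})\times V(n_q,H)$, with dimension the sum $\sum_{k}\dim_{\R}K\cdot d(\sigma_k)=\dim_{\R}K\cdot d(\sigma)$ (using the additive definition of $d$ on general Schubert symbols), and with boundary
$$\partial P_\sigma=\bigcup^{q}_{k=1}\overline{V_{\sigma_1}(n_1,n_2)}\times\dots\times\partial V_{\sigma_k}(n_k,n_{k+1})\times\dots\times\overline{V_{\sigma_q}(n_q,H)}.$$
Second, since $\psi$ restricts to a homeomorphism onto its image $V(n_1,\dots,n_q,H)$, and a homeomorphism carries an open cell with a given characteristic map to an open cell (compose the characteristic map of $P_\sigma$ with $\psi$), I conclude that $V_\sigma(n_1,\dots,n_q,H)=\psi(P_\sigma)$ is an open cell of the same dimension $\dim_{\R}K\cdot d(\sigma)$. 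Finally, because a homeomorphism commutes with closure and set difference, it preserves boundaries: $\partial\psi(P_\sigma)=\psi(\partial P_\sigma)$. Pushing the displayed formula for $\partial P_\sigma$ through $\psi$ (using that $\psi$ of a product of closures is the $\psi$-image of that product, and that $\psi$ is injective so images of the closed box-faces reassemble correctly) gives exactly the asserted expression for $\partial V_\sigma(n_1,\dots,n_q,H)$.

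The only genuinely delicate point is the last step of identifying $\partial\psi(P_\sigma)$ with the stated union of $\psi$-images of product faces. Since $\psi$ is a homeomorphism onto $V(n_1,\dots,n_q,H)$ but \emph{not} onto the full ambient product $\prod^q_{k=1}V(n_k,H)$, I must be careful that "boundary" is computed inside $V(n_1,\dots,n_q,H)$ (equivalently, pull back along $\psi$), not in the larger product. Once boundaries are taken in the correct subspace, the homeomorphism property gives $\partial\psi(P_\sigma)=\psi(\partial P_\sigma)$ cleanly, and the union formula of Lemma \ref{l21} transports termwise; each face $\overline{V_{\sigma_1}}\times\dots\times\partial V_{\sigma_k}\times\dots\times\overline{V_{\sigma_q}}$ maps to its $\psi$-image, and these images union to $\partial V_\sigma(n_1,\dots,n_q,H)$. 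I expect this bookkeeping—verifying that closures of the box factors are preserved and that $\psi$ respects the face decomposition—to be the main, though routine, obstacle; everything else is a direct invocation of Lemmas \ref{l1} and \ref{l21} combined with the elementary fact that homeomorphisms preserve cells, dimensions, and boundaries.
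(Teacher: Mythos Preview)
Your proposal is correct and follows essentially the same approach as the paper: the paper's proof is simply ``Immediate consequence of Lemma \ref{l21} and the fact that $\psi(n_1,\dots,n_q,H)$ is a homeomorphism,'' which is precisely your argument spelled out in detail. Your caution about where the boundary is computed is harmless but unnecessary, since $V(n_1,\dots,n_q,H)$ is compact (hence closed) in the ambient product, so closures---and therefore the cell boundary $\bar e\setminus e$---agree whether taken in the subspace or in $\prod_{k=1}^q V(n_k,H)$.
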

\begin{proof}
Immediate consequence of Lemma \ref{l21} and the fact that
$$\psi(n_1,\dots,n_q,H): \prod^{q-1}_{k=1} V(n_k,n_{k+1})\times V(n_q,H) \longrightarrow  V(n_1,\dots,n_q,H)$$ is a homeomorphism. 
\end{proof}
\begin{lem}\label{l8}
Let $m_{1}, m_{2},...,m_{r},m$ be positive integers with  $m_{1}\prec m_{2}\prec ...\prec m_{r}\prec m$, $F$ an $m$-dimensional inner product vector space, $(f_{i})_{1\leq i \leq m}$ an orthonormal basis of $F$, $ \sigma=(\sigma_1,\sigma_2,...,\sigma_r) \in S(m_1,m_2,...,m_{r},m)$ and
$$\phi(m_1,\dots,m_r,F): \prod^{r-1}_{k=1} V(m_k,m_{k+1})\times V(m_r,F) \longrightarrow  V(m_1,F)$$
the composition map. Then
\begin{enumerate}
\item $\phi(m_1,\dots,m_r,F)(\prod^{r-1}_{k=1} V_{\sigma_{k}}(m_k,m_{k+1})\times  V_{\sigma_{r}}(m_r,F)) \subset V_{\sigma_r \dots\sigma_2\sigma_1}(m_1,F).$
\item $\phi(m_1,\dots,m_r,F)(\partial V_{\sigma_{1}}(m_1,m_2)\times \prod^{r-1}_{k=2} V_{\sigma_{k}}(m_k,m_{k+1})\times  V_{\sigma_{r}}(m_r,F))\\ \subset  \partial V_{\sigma_r \dots\sigma_2\sigma_1}(m_1,F).$ 
\end{enumerate}
\end{lem}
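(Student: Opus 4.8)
The plan is to reduce both inclusions to a single coordinate computation for the composition of two factors, and then to promote it to arbitrary $r$ by induction on $r$. The crux is the following observation. Suppose $v \in \overline{V_{\sigma_1}(m_1,m_2)}$ and $u \in V_{\tau}(m_2,F)$ for some $\tau \in S(m_2,m)$. Fix $k$ and, using the containment $v(E_k) \subset E_{\sigma_1(k)}$ coming from (\ref{eq3}), write $v(e_k) = \sum_{j=1}^{\sigma_1(k)} a_j e_j$, so that $a_{\sigma_1(k)} = \langle v(e_k), e_{\sigma_1(k)}\rangle$. For $j < \sigma_1(k)$ the relation $u(E_j) \subset F_{\tau(j)}$ and the strict monotonicity of $\tau$ give $\tau(j) < \tau(\sigma_1(k))$, whence $u(e_j) \perp f_{\tau(\sigma_1(k))}$. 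Linearity of the inner product in its first slot then collapses the sum to a single surviving term,
\[
\langle (uv)(e_k), f_{\tau\sigma_1(k)}\rangle = a_{\sigma_1(k)}\,\langle u(e_{\sigma_1(k)}), f_{\tau(\sigma_1(k))}\rangle .
\]
Here $a_{\sigma_1(k)} \geq 0$ by (\ref{eq3}) and $\langle u(e_{\sigma_1(k)}), f_{\tau(\sigma_1(k))}\rangle > 0$ is a positive real by the definition (\ref{eq2}) of $V_{\tau}$. Moreover $(uv)(E_k) = u(v(E_k)) \subset u(E_{\sigma_1(k)}) \subset F_{\tau\sigma_1(k)}$, so in all cases $uv \in \overline{V_{\tau\sigma_1}(m_1,F)}$.

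For part (1) I would take $v = u_1 \in V_{\sigma_1}(m_1,m_2)$, so that every coefficient $a_{\sigma_1(k)}$ is strictly positive; the displayed product is then strictly positive for all $k$, which is exactly the membership requirement (\ref{eq2}) for $V_{\tau\sigma_1}(m_1,F)$. To pass from two factors to $r$ factors I would induct on $r$: writing $u_r\dots u_1 = (u_r\dots u_2)u_1$ and applying the inductive hypothesis to the shorter chain $m_2 \prec \dots \prec m_r \prec m$ yields $u_r\dots u_2 \in V_{\sigma_r\dots\sigma_2}(m_2,F)$, after which the two-factor statement with $\tau = \sigma_r\dots\sigma_2$ finishes the step, since $(\sigma_r\dots\sigma_2)\sigma_1 = \sigma_r\dots\sigma_1$.

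For part (2) I would again put $\tau = \sigma_r\dots\sigma_2$, so that $u_r\dots u_2 \in V_{\tau}(m_2,F)$ by part (1), while the first factor is now $v_1 \in \partial V_{\sigma_1}(m_1,m_2)$. By the computation above the composite still lies in $\overline{V_{\tau\sigma_1}(m_1,F)}$, so it remains only to exclude it from the open cell. Comparing (\ref{eq2}) with (\ref{eq3}), membership of $v_1$ in $\overline{V_{\sigma_1}(m_1,m_2)} - V_{\sigma_1}(m_1,m_2)$ produces an index $k_0$ with $\langle v_1(e_{k_0}), e_{\sigma_1(k_0)}\rangle = 0$, i.e. $a_{\sigma_1(k_0)} = 0$. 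The factorization above then forces $\langle (u_r\dots u_2 v_1)(e_{k_0}), f_{\tau\sigma_1(k_0)}\rangle = 0$, which violates the strict positivity demanded by (\ref{eq2}); hence the composite lies in $\overline{V_{\tau\sigma_1}(m_1,F)} - V_{\tau\sigma_1}(m_1,F) = \partial V_{\tau\sigma_1}(m_1,F)$.

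The one point that needs care, rather than mere bookkeeping with the flags $E_j$, $F_j$ and the monotonicity of the symbols, is the arithmetic over a possibly noncommutative $K$: when $K = \H$ the coefficients $a_j$ are quaternions, so they must be kept on the left of $u(e_j)$, and the sign of the final product is meaningful only because the two surviving factors $a_{\sigma_1(k)}$ and $\langle u(e_{\sigma_1(k)}), f_{\tau(\sigma_1(k))}\rangle$ are honest nonnegative reals, exactly as guaranteed by the defining (in)equalities of the Schubert cells.
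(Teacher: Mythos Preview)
Your argument is correct and follows essentially the same route the paper sketches: both assertions are handled by induction on $r$, with the inductive step collapsing to the two-factor computation you carry out, and part (2) invoking part (1) together with the closure description (\ref{eq3}). The paper's proof is only a two-line outline (``induction on $r$ \dots\ straightforward \dots\ uses the first assertion and (\ref{eq3})''), and your write-up supplies exactly the details that outline calls for, including the careful remark about left scalars over $\H$.
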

\begin{proof}
The first assertion is proved by induction on $r$ and is straightforward. Similarly, the second assertion is proved by induction on $r$. It uses the first assertion and (\ref{eq3}).
\end{proof}
\begin{lem}\label{l9}
Let $\sigma=(\sigma_1,\sigma_2,\dots,\sigma_q)\in S(n_1,n_2,\dots,n_q,n)$, then
\begin{enumerate}
	\item $V_{\sigma}(n_1,n_2,\dots,n_q,H)=V(n_1,n_2,\dots,n_q,H)\cap  \prod^{q}_{k=1}
	V_{\sigma_q\dots\sigma_{k+1}\sigma_k}(n_k,H).$
	\item $\partial V_{\sigma}(n_1,n_2,\dots,n_q,H)\subset 
	\cup_{k=1}^q V(n_1,H)\times\dots\times V(n_{k-1},H)\times\\ 
	\partial V_{\sigma_q\dots\sigma_{k+1}\sigma_k}(n_k,H)\times 
	             V(n_{k+1},H)\times\dots\times V(n_q,H).$
\end{enumerate}
\end{lem}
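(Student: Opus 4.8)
The plan is to rest everything on the fact, established when the map was introduced, that $\psi:=\psi(n_1,\dots,n_q,H)$ is a homeomorphism from $\prod_{k=1}^{q-1}V(n_k,n_{k+1})\times V(n_q,H)$ onto $V(n_1,\dots,n_q,H)$, together with a cancellation property for the Schubert conditions under composition. Throughout I would write $v_k=u_q\dots u_{k+1}u_k$ for the $k$-th coordinate of $\psi((u_j))$ and exploit the telescoping identities $v_q=u_q$ and $v_k=v_{k+1}u_k$.

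For the inclusion $\subset$ in (1) I would argue directly: if $v\in V_{\sigma}(n_1,\dots,n_q,H)$ then $v=\psi((u_j))$ with $u_k\in V_{\sigma_k}(n_k,n_{k+1})$ and $u_q\in V_{\sigma_q}(n_q,H)$, so by Lemma~\ref{l8}(1) the coordinate $v_k=u_q\dots u_k$ lies in $V_{\sigma_q\dots\sigma_{k+1}\sigma_k}(n_k,H)$, while $v$ lies in $V(n_1,\dots,n_q,H)$ because that set is the image of $\psi$. The reverse inclusion is where the work lies, and it rests on the following cancellation claim: if $w\in V_{\tau}(s,H)$, $u\in V(r,s)$ and $wu\in V_{\tau\rho}(r,H)$ with $\rho\in S(r,s)$, then $u\in V_{\rho}(r,s)$. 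To prove it I would first establish that $w^{-1}(H_{\tau(j)})=E_j$ for every $j$: the inclusion $E_j\subset w^{-1}(H_{\tau(j)})$ is immediate, and equality follows on comparing dimensions, since $w$ is an injective isometry and $\dim(\Im w\cap H_{\tau(j)})=j$ exactly as in the proof of Lemma~\ref{l4}. Applying this with $j=\rho(i)$ to $(wu)(E_i)\subset H_{\tau\rho(i)}$ gives $u(E_i)\subset E_{\rho(i)}$. For the sign condition I would expand $u(e_i)=\sum_{l\le\rho(i)}a_l e_l$, whence $(wu)(e_i)=\sum_l a_l w(e_l)$; pairing with $h_{\tau\rho(i)}$ and using that $w(e_l)\in H_{\tau(l)}\subset H_{\tau\rho(i)-1}$ annihilates every term with $l<\rho(i)$, leaving $\langle(wu)(e_i),h_{\tau\rho(i)}\rangle=a_{\rho(i)}\langle w(e_{\rho(i)}),h_{\tau\rho(i)}\rangle$. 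As the left-hand side and the scalar $\langle w(e_{\rho(i)}),h_{\tau\rho(i)}\rangle$ are both positive reals, $a_{\rho(i)}=\langle u(e_i),e_{\rho(i)}\rangle>0$, as required.

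Granting the claim, the inclusion $\supset$ in (1) is immediate: for $v$ in the right-hand side I write $v=\psi((u_j))$ using bijectivity of $\psi$, and apply the claim with $w=v_{k+1}\in V_{\sigma_q\dots\sigma_{k+1}}(n_{k+1},H)$, $\tau=\sigma_q\dots\sigma_{k+1}$, $\rho=\sigma_k$ and $wu=v_k\in V_{\sigma_q\dots\sigma_k}(n_k,H)$ to obtain $u_k\in V_{\sigma_k}$ for $k<q$, while $u_q=v_q\in V_{\sigma_q}$ directly; hence $v\in V_{\sigma}(n_1,\dots,n_q,H)$.

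For (2) I would start from the description of $\partial V_{\sigma}(n_1,\dots,n_q,H)$ in Lemma~\ref{l7} as the union over $k$ of $\psi(\overline{V_{\sigma_1}}\times\dots\times\partial V_{\sigma_k}\times\dots\times\overline{V_{\sigma_q}})$. A point of this set is $\psi((u_j))$ whose non-boundary factors lie in closures $\overline{V_{\sigma_l}}=V_{\sigma_l}\cup\partial V_{\sigma_l}$, so Lemma~\ref{l8}(2) cannot be quoted verbatim. The remedy is to let $k'$ be the \emph{largest} index with $u_{k'}\in\partial V_{\sigma_{k'}}$; then $u_l\in V_{\sigma_l}$ for every $l>k'$, so the coordinate $v_{k'}=u_q\dots u_{k'}$ has its innermost factor on the boundary and all later factors in open cells, which is precisely the hypothesis of Lemma~\ref{l8}(2). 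That lemma then gives $v_{k'}\in\partial V_{\sigma_q\dots\sigma_{k'}}(n_{k'},H)$, while the remaining coordinates lie trivially in the corresponding $V(n_l,H)$, exhibiting the point in the $k'$-th term of the claimed union. I expect the cancellation claim of (1) to be the only genuine obstacle: it is what makes the factorization rigid enough to recover each $u_k$ from the product, and once it is in hand both inclusions of (1) and the reduction in (2) are formal.
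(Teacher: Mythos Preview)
Your proof is correct and follows essentially the same route as the paper. The only organizational difference is in the inclusion $\supset$ of (1): the paper sets it up as an induction on $q$ whose base case $q=2$ is exactly your cancellation claim, and whose inductive step repeatedly invokes that base case; you instead isolate the cancellation claim once and apply it directly to each relation $v_k=v_{k+1}u_k$, peeling off the factors from $k=q$ downward. This avoids the induction wrapper and is a bit cleaner, but the key computation---showing $w^{-1}(H_{\tau(j)})=E_j$ and then reading off $\langle u(e_i),e_{\rho(i)}\rangle>0$ from the expansion of $\langle wu(e_i),h_{\tau\rho(i)}\rangle$---is identical to the paper's $q=2$ case. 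For (2), your choice of the largest index $k'$ with $u_{k'}\in\partial V_{\sigma_{k'}}$ and the subsequent appeal to Lemma~\ref{l8}(2) is precisely the paper's argument (the paper calls this index $l$).
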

\begin{proof}
\begin{enumerate}
	\item
By Lemma \ref{l8}
$$V_{\sigma}(n_1,n_2,\dots,n_q,H) \subset V(n_1,n_2,\dots,n_q,H)\cap  \prod^{q}_{k=1}
	V_{\sigma_q\dots\sigma_{k+1}\sigma_k}(n_k,H)$$
	We prove by induction on $q$ that  
	$$  V(n_1,n_2,\dots,n_q,H)\cap  \prod^{q}_{k=1}
	V_{\sigma_q\dots\sigma_{k+1}\sigma_k}(n_k,H)\subset V_{\sigma}(n_1,n_2,\dots,n_q,H)$$
	For $q=2$, let $(v_1,v_2)\in V(n_1,n_2,H)\cap 
	V_{\sigma_2 \sigma_1}(n_1,H)\times V_{\sigma_2}(n_2,H)$, there exists $(u_1,u_2)\in V(n_1,n_2)\times V(n_2,H)$ such that
	$(v_1,v_2)= (u_2u_1,u_2)$. $u_2=v_2 \in V_{\sigma_2}(n_2,H)$. To show that $u_1 \in V_{\sigma_1}(n_1,n_2)$, let $k\in \{1,2,\dots,n_1\}$,
	$u_2u_1(E_k) = v_1(E_k)\subset H_{\sigma_2\sigma_1(k)}$, 
	thus $u_1(E_k)\subset u_2^{-1}(H_{\sigma_2\sigma_1(k)})\subset E_{\sigma_{1}(k)}.$
	Write $u_1(e_k)= \lambda e_{\sigma_1(k)}+x$ where $x\in E_{\sigma_1(k)-1}$.
\begin{eqnarray*}
\langle u_2u_1(e_k), e_{\sigma_2\sigma_1(k)}\rangle&=& \langle u_2(\lambda e_{\sigma_1(k)}+x),e_{\sigma_2\sigma_1(k)}\rangle\\
             &=&\langle \lambda u_2( e_{\sigma_1(k)}),e_{\sigma_2\sigma_1(k)}\rangle\\
						&=& \lambda \langle u_2( e_{\sigma_1(k)}),e_{\sigma_2\sigma_1(k)}\rangle 
\end{eqnarray*}
In the other hand
$$\langle u_2u_1(e_k), e_{\sigma_2\sigma_1(k)}\rangle = \langle v_1(e_k), e_{\sigma_2\sigma_1(k)}\rangle\succ 0$$
or $\langle u_2( e_{\sigma_1(k)}),e_{\sigma_2\sigma_1(k)}\rangle \succ 0$ therefore $\lambda\succ 0$, i.e. 
$\langle u_1(e_k), e_{\sigma_1(k)}\rangle\succ 0$, $1\leq k\leq n_1$. Thus $u_1\in V_{\sigma_1}(n_1,n_2)$ and 
$(v_1,v_2)\in V_{(\sigma_1, \sigma_2)}(n_1,n_2,H)$.
Assume that the property is true for $q-1$, $q\geq 3$ and let 
$$(v_1,v_2,\dots,v_q) \in  V(n_1,n_2,\dots,n_q,H)\cap  \prod^{q}_{k=1}V_{\sigma_q\dots\sigma_{k+1}\sigma_k}(n_k,H)$$ 
for some $\sigma=(\sigma_1,\sigma_2,\dots,\sigma_q)\in S(n_1,n_2,\dots,n_q,n)$. Let $$(u_1,u_2,\dots,u_q) \in V(n_1,n_2)\times V(n_2,n_3)\times \dots \times V(n_q,H)$$ be such that $v_k=u_q\dots u_{k+1}u_k$, $1\leq k\leq q.$
$$(v_2,v_3,\dots,v_q) \in  V(n_2,n_3,\dots,n_q,H)\cap  \prod^{q}_{k=2}V_{\sigma_q\dots\sigma_{k+1}\sigma_k}(n_k,H).$$
By the induction assumption 
$$(v_2,v_3,\dots,v_q) \in  V_{(\sigma_2,\sigma_3,\dots\sigma_q)}(n_2,n_3,\dots,n_q,H)$$  i.e.

$$(u_2,u_3,\dots,u_q)\in V_{\sigma_2}(n_2,n_3)\times V_{\sigma_3}(n_{3},n_{4})\times \dots\times{V_{\sigma_q}(n_q,H)}$$

 Furthermore,
$(v_1,v_2) \in V(n_1,n_2,H)\cap V_{\sigma_q\dots\sigma_{2}\sigma_1}(n_1,H) \times V_{\sigma_q\dots\sigma_{3}\sigma_2}(n_2,H)$. By the induction assumption
 $$(v_1,v_2)\in V_{(\sigma_1,\sigma_q\dots\sigma_{3}\sigma_2)}(n_1,n_2,H)$$
i.e.
$$\left\{
\begin{array}{l}
u_1\in V_{\sigma_1}(n_1,n_2) \\
 u_q\dots u_3 u_2 \in V_{ \sigma_q\dots\sigma_{3}\sigma_2}(n_2,H)
\end{array}
\right.
$$
 It follows that
$$(u_1,u_2,\dots,u_q)\in \cup_{k=1}^q {V_{\sigma_1}(n_1,n_2)}\times V_{\sigma_2}(n_{2},n_{3})\times
\dots\times{V_{\sigma_q}(n_q,H)}$$
and 
$$(v_1,v_2,\dots,v_q) \in  V_{(\sigma_1,\sigma_2,\dots\sigma_q)}(n_1,n_2,\dots,n_q,H)$$
 as desired.
\item 
Let $(v_1,v_2,\dots,v_q)\in \partial V_{\sigma}(n_1,n_2,\dots,n_q,H)$. By Lemma \ref{l7}, there exists $k\in \{1,2,...,q\}$ 
and $$(u_1,u_2,\dots,u_q)\in \cup_{k=1}^q\overline{V_{\sigma_1}(n_1,n_2)}\times\dots\times\partial V_{\sigma_k}(n_{k},n_{k+1})\times
\dots\times\overline{V_{\sigma_q}(n_q,H)}$$
such that
$$\psi(n_1,n_2,\dots,n_q,H)(u_1,u_2,\dots,u_q)=(v_1,v_2,\dots,v_q)$$
Define
$$l=\left\{\begin{array}{ll}
       q&\mbox{if } u_q\in \partial V_{\sigma_q}(n_q,H)\\
			\mbox{max}\{i\in\{1,\dots,q-1\}|u_i\in\partial V_{\sigma_i}(n_i,n_{i+1})\}
			&\mbox{if } u_q\notin\partial V_{\sigma_q}(n_q,H).
			\end{array}
			\right.
			$$
Then $(u_l,u_{l+1},\dots,u_q)\in\partial V_{\sigma_l}(n_l,n_{l+1})\times V_{\sigma_{l+1}(n_{l+1},n_{l+2}})\times\dots
\times V_{\sigma_q}(n_q,H)$.
By Lemma \ref{l8}.  $v_l=u_q\dots u_{l+1}u_l\in\partial V_{\sigma_q\dots\sigma_{l+1}\sigma_l}(n_l,H)$, therefore \\ 
$(v_1,\dots,v_l,\dots,v_q)$ is in $$V(n_1,H)\times\dots\times V(n_{l-1},H)\times\partial V_{\sigma_q\dots\sigma_{l+1}\sigma_l}(n_l,H)\times V(n_{l+1},H)\times\dots\times V(n_q,H) $$ as desired. 
\end{enumerate}
\end{proof}
\section{The cell structure for flag manifolds}\label{s6}	
Let $H$ be a finite dimensional inner product vector space of dimension $n$. $(n_k)_{1\leq k\leq q}$ a sequence of integers such that $1 \leq n_1 \prec n_2 \dots \prec n_{q} \prec n$.
Define the flag manifold $G(n_1,\dots,n_q,H)$ to be the subspace of $\prod^{q}_{k=1}G(n_k,H)$ 
given by
\begin{equation}\label{eq6} 
G(n_1,\dots,n_q,H)=\{{(X^k)_{1\leq k\leq q}}\in \prod^{q}_{k=1}G(n_k,H) \suchthat X^1\subset X^2\subset\dots\subset X^{q}\}
\end{equation}
$G(n_1,\dots,n_q,H)$ is a subspace of a Haudorff space, it is then Hausdorff. 
Let 
\begin{equation}\label{eq17} 
\prod^{q}_{k=1}p(n_k,H):\prod^{q}_{k=1}V(n_k,H)\longrightarrow \prod^{q}_{k=1}G(n_k,H)
\end{equation}
be the product of the quotient maps defined by (\ref{eq1}). Then
\begin{equation}\label{eq18}  
G(n_1,\dots,n_q,H)=\prod^{q}_{k=1}p(n_k,H)(V(n_1,\dots,n_q,H))
\end{equation}
$V(n_1,\dots,n_q,H)$ is compact, then so is $G(n_1,\dots,n_q,H)$. Therefore the map $\prod^{q}_{k=1}p(n_k,H)$ induces a quotient map
\begin{equation}\label{eq7} 
 p(n_1,\dots,n_q,H):V(n_1,\dots,n_q,H)\longrightarrow G(n_1,\dots,n_q,H) 
\end{equation}
\\

Let $(h_i)_{1\leq i\leq n}$ be an orthonormal basis of $H$, $H_0$ the zero subspace of $H$ and $H_{k}$ the subspace spanned by $(h_i)_{1\leq i\leq k}$, $1\leq k\leq n$. $(H_k)_{0\leq k\leq n}$ is  a complete flag in $H$.
Let $(X^1,X^2,\dots,X^{q}) \in G(n_1,\dots,n_q,H)$, as explained in (\ref{eq11}), the complete flag $(H_k)_{0\leq k\leq n}$ in $H$ induces complete flags
$$X^i_{0}\subset X^i_{1}\subset \dots \subset X^i_{n_{i}} = X^i, \quad 1\leq i\leq q.$$
$X^q \in G(n_q,H)$, let  $\sigma_{X^q}$ be its Schubert symbol as defined in (\ref{eq9}). Similarly for $1\leq i\prec q $, $X^i\in G(n_i,X^{i+1})$, let  $\sigma_{X^i}$ be its Schubert symbol and define the  Schubert symbol $\sigma_{(X^1,X^2,\dots,X^{q})}$ of the flag $(X^1,X^2,\dots,X^{q}) \in G(n_1,\dots,n_q,H)$ to be the general Schubert symbol
\begin{equation}\label{eq15}
\sigma_{(X^1,X^2,\dots,X^{q})}=(\sigma_{X^1},\sigma_{X^2},\dots,\sigma_{X^q}) \in S(n_1,n_2,\dots,n_q,n).
\end{equation}  
 For $\sigma=(\sigma_1,\sigma_2,\dots,\sigma_{q}) \in S(n_1,n_2,\dots,n_q,n)$, define 
$$G_{\sigma}(n_1,\dots,n_q,H)=\{(X^1,X^2,\dots,X^{q})\in G(n_1,\dots,n_q,H)\suchthat \sigma_{(X^1,X^2,\dots,X^{q})}=\sigma \}$$
The following lemma is an immediate consequence of the previous definition. 
\begin{lem}\label{l6} The subsets $G_{\sigma}(n_1,\dots,n_q,H)$, $\sigma \in S(n_1,n_2,\dots,n_q,n)$ form a partition of $G(n_1,\dots,n_q,H)$.
 \end{lem}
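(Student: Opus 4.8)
The plan is to exhibit the family $(G_{\sigma}(n_1,\dots,n_q,H))_{\sigma}$ as the collection of fibers of a single well-defined function, since the fibers of any function automatically partition its domain. Concretely, I would introduce the Schubert-symbol map
$$\Phi\colon G(n_1,\dots,n_q,H)\longrightarrow S(n_1,n_2,\dots,n_q,n),\qquad (X^1,\dots,X^q)\mapsto \sigma_{(X^1,\dots,X^q)},$$
defined by (\ref{eq15}), and then observe that by construction $G_{\sigma}(n_1,\dots,n_q,H)=\Phi^{-1}(\sigma)$ for every $\sigma\in S(n_1,n_2,\dots,n_q,n)$.

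The one point that genuinely needs checking is that $\Phi$ really is a well-defined map landing in $S(n_1,\dots,n_q,n)$. First I would recall that $X^q$ is a subspace of the completely flagged space $H$, so its Schubert symbol $\sigma_{X^q}$ is the uniquely determined elementary Schubert symbol of (\ref{eq9}), an element of $S(n_q,n)$. Next, for $1\leq i<q$, the inclusion $X^i\subset X^{i+1}$ together with the induced complete flag on $X^{i+1}$ furnished by (\ref{eq11}) makes $X^i$ a subspace of a completely flagged $n_{i+1}$-dimensional space; hence $\sigma_{X^i}\in S(n_i,n_{i+1})$ is again uniquely determined. Assembling these components gives $\sigma_{(X^1,\dots,X^q)}=(\sigma_{X^1},\dots,\sigma_{X^q})\in S(n_1,n_2)\times\cdots\times S(n_q,n)=S(n_1,\dots,n_q,n)$, so $\Phi$ is a genuine function.

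Once $\Phi$ is in hand the conclusion is purely formal: each flag lies in exactly one fiber $\Phi^{-1}(\sigma)$, namely the one indexed by its own Schubert symbol, so the sets $G_{\sigma}=\Phi^{-1}(\sigma)$ are pairwise disjoint and their union is all of $G(n_1,\dots,n_q,H)$. I do not anticipate any real obstacle here; the entire content of the lemma is the observation that the Schubert symbol of a flag is unambiguously defined, which was already arranged by the construction preceding the statement. If one insists that the blocks of a partition be nonempty, one may either discard the empty fibers, or note that a characteristic flag realizing each $\sigma$ can be exhibited explicitly, exactly as is done for the cells themselves in the sequel.
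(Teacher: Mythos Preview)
Your proposal is correct and is essentially the same as the paper's argument: the paper simply declares the lemma an immediate consequence of the definition of $G_{\sigma}$ as the set of flags with Schubert symbol $\sigma$, and you have spelled out precisely why that makes the family into the fibers of a well-defined map $\Phi$. There is nothing to add.
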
 
%
%
%
%

\begin{lem}\label{l11} Let $\sigma =(\sigma_1,\sigma_2,...,\sigma_q) \in S(n_1,n_2,...,n_{q},n)$, then
\begin{enumerate}
	\item $p(n_{1}, n_{2},...,n_{q},H)_{/}:V_{\sigma}(n_{1}, n_{2},...,n_{q},H)\to G(n_{1}, n_{2},...,n_{q},H)$ is injective.
	\item $p(n_{1}, n_{2},...,n_{q},H)(V_{\sigma}(n_{1}, n_{2},...,n_{q},H)) \\ \cap p(n_{1}, n_{2},...,n_{q},H)(\partial V_{\sigma}(n_{1}, n_{2},...,n_{q},H))=\emptyset.$
\end{enumerate}
\end{lem}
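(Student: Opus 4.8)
The plan is to reduce both assertions to the single-Grassmannian case already settled in Lemma \ref{l4}, using the two structural descriptions of $V_{\sigma}$ and $\partial V_{\sigma}$ furnished by Lemma \ref{l9}. Throughout I would abbreviate $\tau_k=\sigma_q\dots\sigma_{k+1}\sigma_k\in S(n_k,n)$ for the composite elementary Schubert symbol, so that Lemma \ref{l9}(1) reads
$$V_{\sigma}(n_1,\dots,n_q,H)=V(n_1,\dots,n_q,H)\cap\prod_{k=1}^{q}V_{\tau_k}(n_k,H).$$
I would also recall that $p(n_1,\dots,n_q,H)$ is the restriction to $V(n_1,\dots,n_q,H)$ of the product map $\prod_k p(n_k,H)$, so that it sends $(v_1,\dots,v_q)$ to the flag $(\Im v_1,\dots,\Im v_q)$.

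For (1), I would take $(v_k)$ and $(v_k')$ in $V_{\sigma}(n_1,\dots,n_q,H)$ with the same image, so that $\Im v_k=\Im v_k'$ for every $k$. By the displayed description each of $v_k$ and $v_k'$ lies in $V_{\tau_k}(n_k,H)$, and Lemma \ref{l4}(1) asserts that $p(n_k,H)$ restricts to a bijection $V_{\tau_k}(n_k,H)\to G_{\tau_k}(n_k,H)$, in particular an injection. Hence the equality of images forces $v_k=v_k'$ for every $k$, so $(v_k)=(v_k')$ and the restricted map is injective.

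For (2), I would argue by contradiction, supposing that some flag $(Y^1,\dots,Y^q)$ lies in both $p(V_{\sigma})$ and $p(\partial V_{\sigma})$. A representative in $V_{\sigma}$ has, by Lemma \ref{l9}(1), its $l$th component in $V_{\tau_l}(n_l,H)$, so Lemma \ref{l4}(1) gives that the $H$-Schubert symbol of $Y^l=\Im v_l$ equals $\tau_l$ for every $l$. On the other hand a representative $(w_1,\dots,w_q)\in\partial V_{\sigma}$ satisfies, by Lemma \ref{l9}(2), $w_l\in\partial V_{\tau_l}(n_l,H)$ for at least one index $l$; since $Y^l=\Im w_l=p(n_l,H)(w_l)$, Lemma \ref{l4}(2) then yields $\sigma_{Y^l}^H\prec\tau_l$ for that $l$. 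These two statements about the single subspace $Y^l$ are contradictory, which shows the intersection is empty.

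The only genuine care needed is bookkeeping: the Schubert symbol $\sigma_{Y^l}^H$ is an invariant of the subspace $Y^l$ together with the ambient complete flag $(H_k)$ alone, and in particular does not depend on which Stiefel representative is chosen; it is precisely this representative-independence that allows the computation from the $V_{\sigma}$-side and the computation from the $\partial V_{\sigma}$-side to collide. No individual step is hard, as the whole substance is carried by Lemma \ref{l9}, which localizes each claim to a single factor where Lemma \ref{l4} applies directly.
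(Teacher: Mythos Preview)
Your argument is correct and is precisely the one the paper has in mind: the paper's own proof simply says that part (1) follows from the first assertions of Lemma \ref{l4} and Lemma \ref{l9}, and part (2) from their second assertions, which is exactly the reduction you spell out in detail. Your explicit identification of the composite symbols $\tau_k=\sigma_q\cdots\sigma_k$ and the contradiction on $\sigma_{Y^l}^H$ are a faithful unpacking of that one-line citation.
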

\begin{proof}
The first claim is a consequence of the first assertion of lemma \ref{l4} and the first assertion of Lemma \ref{l9} while the second claim is a consequence of the second assertions of the same Lemmas.
	\end{proof}
\begin{lem}\label{l12}
Let $\sigma=(\sigma_1,\sigma_2,\dots,\sigma_q)\in S(n_1,n_2,\dots,n_q,n)$ then
  $$G_{\sigma}(n_1,n_2,\dots,n_q,H)=G(n_1,n_2,\dots,n_q,H)\cap  \prod^{q}_{k=1}
	G_{\sigma_q\dots\sigma_{k+1}\sigma_k}(n_k,H).$$
\end{lem}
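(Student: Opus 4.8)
The plan is to prove the equality by relating the Schubert symbol of a flag $(X^1,\dots,X^q)$ to the Schubert symbols of its individual terms $X^k$ computed with respect to the fixed flag $(H_\ell)$ in $H$. The key observation is that the definition of $\sigma_{(X^1,\dots,X^q)}=(\sigma_{X^1},\dots,\sigma_{X^q})$ uses the \emph{relative} Schubert symbols $\sigma_{X^i}$ of $X^i$ inside $X^{i+1}$ (for $i<q$) and of $X^q$ inside $H$, whereas the right-hand side involves the \emph{absolute} Schubert symbols $\sigma_{X^k}^H$ of each $X^k$ inside $H$. The bridge between these two viewpoints is Lemma \ref{l3}, whose composition formula $\sigma_X^Z=\sigma_Y^Z\sigma_X^Y$ is precisely what converts a chain of relative symbols into an absolute one.

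First I would fix a flag $(X^1,\dots,X^q)\in G(n_1,\dots,n_q,H)$ and apply Lemma \ref{l3} iteratively along the chain $X^1\subset X^2\subset\dots\subset X^q\subset H$. Taking $Z=H$ and telescoping the inclusions, the composition formula gives $\sigma_{X^k}^H=\sigma_{X^q}\,\sigma_{X^{q-1}}\cdots\sigma_{X^{k+1}}\,\sigma_{X^k}$, where each $\sigma_{X^i}=\sigma_{X^i}^{X^{i+1}}$ for $i<q$ and $\sigma_{X^q}=\sigma_{X^q}^H$ are exactly the components of the flag's Schubert symbol. In other words, if $\sigma_{(X^1,\dots,X^q)}=(\sigma_1,\dots,\sigma_q)$ with $\sigma_i=\sigma_{X^i}$, then the absolute symbol of the $k$-th term is $\sigma_{X^k}^H=\sigma_q\cdots\sigma_{k+1}\sigma_k$.

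With this identity in hand, the set equality follows by a chain of logical equivalences. A flag $(X^1,\dots,X^q)$ lies in $G_\sigma(n_1,\dots,n_q,H)$ if and only if $\sigma_{(X^1,\dots,X^q)}=\sigma$, i.e.\ $\sigma_{X^i}=\sigma_i$ for every $i$. I claim this is equivalent to the condition $\sigma_{X^k}^H=\sigma_q\cdots\sigma_{k+1}\sigma_k$ for every $k$, which is exactly the membership condition defining the right-hand side, namely $(X^1,\dots,X^q)\in G(n_1,\dots,n_q,H)\cap\prod_{k=1}^q G_{\sigma_q\cdots\sigma_{k+1}\sigma_k}(n_k,H)$. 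The forward direction is immediate from the telescoping identity of the previous paragraph. For the reverse direction, one recovers the components inductively: knowing $\sigma_{X^q}^H=\sigma_q$ gives $\sigma_q$, and then $\sigma_{X^{q-1}}^H=\sigma_q\sigma_{q-1}$ determines $\sigma_{q-1}$ since $\sigma_q$ is already a strictly increasing (hence left-cancellable) function, and so on down to $\sigma_1$.

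The main obstacle I anticipate is the reverse implication, specifically the cancellation step: one must check that the composite symbols $\sigma_q\cdots\sigma_{k+1}\sigma_k$ uniquely determine the factors $\sigma_k$, so that the condition on the right-hand side genuinely forces each relative symbol to equal the prescribed $\sigma_i$. This is where the strictly increasing (injective) nature of elementary Schubert symbols is essential—left-composition by an injective function is cancellable—and care is needed to run the induction in the correct order, peeling off $\sigma_q$ first. A secondary point worth verifying is that the right-hand side is genuinely contained in the flag manifold, i.e.\ that the intersection with $\prod_k G_{\sigma_q\cdots\sigma_k}(n_k,H)$ already respects the nesting $X^1\subset\dots\subset X^q$; but this is guaranteed by the explicit intersection with $G(n_1,\dots,n_q,H)$ appearing in the statement, so no extra argument is required there.
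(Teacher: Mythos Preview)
Your argument is correct. For the inclusion $G_{\sigma}(n_1,\dots,n_q,H)\subset G(n_1,\dots,n_q,H)\cap\prod_{k}G_{\sigma_q\cdots\sigma_k}(n_k,H)$ you proceed exactly as the paper does, telescoping Lemma~\ref{l3} along the chain $X^1\subset\cdots\subset X^q\subset H$. For the reverse inclusion, however, your route differs from the paper's. You argue directly: knowing $\sigma_{X^k}^H=\sigma_q\cdots\sigma_k$ for all $k$ and also (again by Lemma~\ref{l3}) $\sigma_{X^k}^H=\sigma_{X^q}\cdots\sigma_{X^k}$, you peel off the factors one at a time using left-cancellability of injective maps. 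The paper instead uses a global partition argument: by Lemma~\ref{l6} the sets $G_{\sigma}(n_1,\dots,n_q,H)$ partition the flag manifold, so the right-hand sides (as $\sigma$ varies) cover it; by Lemma~\ref{l2} the right-hand sides are pairwise disjoint; since a partition refining another partition must coincide with it, equality follows. Your approach is more hands-on and self-contained (it does not appeal to the full partition structure of Lemmas~\ref{l2} and~\ref{l6}), while the paper's avoids the inductive cancellation step entirely by exploiting that both sides describe partitions of the same space.
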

\begin{proof}
By Lemma \ref{l3} 
$$G_{\sigma}(n_1,n_2,\dots,n_q,H)\subset G(n_1,n_2,\dots,n_q,H)\cap  \prod^{q}_{k=1}
	G_{\sigma_q\dots\sigma_{k+1}\sigma_k}(n_k,H)$$
	therefore by Lemma \ref{l6}, the subsets $$G(n_1,n_2,\dots,n_q,H)\cap  \prod^{q}_{k=1}
	G_{\sigma_q\dots\sigma_{k+1}\sigma_k}(n_k,H)$$  
	cover $G(n_1,n_2,\dots,n_q,H)$. By lemma \ref{l2}
	the subsets $$G(n_1,n_2,\dots,n_q,H)\cap  \prod^{q}_{k=1}
	G_{\sigma_q\dots\sigma_{k+1}\sigma_k}(n_k,H)$$
	are disjoint, therefore $$G_{\sigma}(n_1,n_2,\dots,n_q,H)=G(n_1,n_2,\dots,n_q,H)\cap  \prod^{q}_{k=1}
	G_{\sigma_q\dots\sigma_{k+1}\sigma_k}(n_k,H)$$
\end{proof}
\begin{lem}\label{l13} Let $\sigma =(\sigma_1,\sigma_2,...,\sigma_q) \in S(n_1,n_2,...,n_{q},n)$, then
$$p(n_{1}, n_{2},...,n_{q},H)(V_{\sigma}(n_{1}, n_{2},...,n_{q},H))=G_{\sigma}(n_{1}, n_{2},...,n_{q},H).$$
	\end{lem}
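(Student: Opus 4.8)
The plan is to prove the set equality by establishing the two inclusions separately; in both directions the argument reduces coordinatewise to the bijection $V_{\tau}(n_k,H)\to G_{\tau}(n_k,H)$ furnished by Lemma~\ref{l4}(1), and these coordinatewise statements are glued together by the two matched product characterizations in Lemma~\ref{l9}(1) and Lemma~\ref{l12}. The structural fact I would exploit throughout is that, by (\ref{eq7}), the map $p(n_1,\dots,n_q,H)$ is merely the restriction of $\prod_{k=1}^{q}p(n_k,H)$ to the generalized Stiefel manifold, so that it acts coordinatewise and sends a tuple $(v_1,\dots,v_q)$ to $(\Im v_1,\dots,\Im v_q)$.

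For the inclusion $p(n_1,\dots,n_q,H)(V_{\sigma}(\dots,H))\subset G_{\sigma}(\dots,H)$, I would start from a point $(v_1,\dots,v_q)\in V_{\sigma}(n_1,\dots,n_q,H)$ and apply Lemma~\ref{l9}(1) to deduce that $v_k\in V_{\sigma_q\cdots\sigma_{k+1}\sigma_k}(n_k,H)$ for every $k$. Writing $X^k=p(n_k,H)(v_k)=\Im v_k$, Lemma~\ref{l4}(1) gives $X^k\in G_{\sigma_q\cdots\sigma_{k+1}\sigma_k}(n_k,H)$; since the tuple already lies in $G(n_1,\dots,n_q,H)$, the characterization of $G_{\sigma}$ in Lemma~\ref{l12} then places $(X^1,\dots,X^q)$ in $G_{\sigma}(n_1,\dots,n_q,H)$, as desired.

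For the reverse inclusion I would run the same chain backwards. Given $(X^1,\dots,X^q)\in G_{\sigma}(n_1,\dots,n_q,H)$, Lemma~\ref{l12} places each $X^k$ in $G_{\sigma_q\cdots\sigma_{k+1}\sigma_k}(n_k,H)$, so the bijectivity in Lemma~\ref{l4}(1) produces a unique $v_k\in V_{\sigma_q\cdots\sigma_{k+1}\sigma_k}(n_k,H)$ with $\Im v_k=X^k$. The only point needing a short verification is that the assembled tuple $(v_1,\dots,v_q)$ genuinely lands in $V(n_1,\dots,n_q,H)$, i.e. that $\Im v_1\subset\cdots\subset\Im v_q$; but this is immediate, since $\Im v_k=X^k$ and the flag condition $X^1\subset\cdots\subset X^q$ holds by hypothesis. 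A further application of Lemma~\ref{l9}(1) then gives $(v_1,\dots,v_q)\in V_{\sigma}(n_1,\dots,n_q,H)$, and by construction $p(n_1,\dots,n_q,H)(v_1,\dots,v_q)=(X^1,\dots,X^q)$.

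I do not anticipate any serious obstacle here: the single place requiring care is the compatibility check just described, namely confirming that the coordinatewise preimages $v_k$ reassemble into an honest point of the generalized Stiefel manifold, and even that is forced automatically by the flag condition on the $X^k$. In essence the lemma is the coordinatewise transport of Lemma~\ref{l4}(1) across the parallel product decompositions of $V_{\sigma}$ and $G_{\sigma}$ provided by Lemma~\ref{l9}(1) and Lemma~\ref{l12}.
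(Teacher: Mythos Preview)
Your proposal is correct and follows essentially the same route as the paper: both directions are obtained by combining the coordinatewise bijection of Lemma~\ref{l4}(1) with the matched product descriptions of $V_{\sigma}$ and $G_{\sigma}$ in Lemma~\ref{l9}(1) and Lemma~\ref{l12}, together with the observation that the flag condition $X^1\subset\cdots\subset X^q$ forces the lifted tuple $(v_1,\dots,v_q)$ to lie in $V(n_1,\dots,n_q,H)$. There is nothing to add.
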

\begin{proof} By Lemma \ref{l9}
 $$V_{\sigma}(n_1,n_2,\dots,n_q,H)=V(n_1,n_2,\dots,n_q,H)\cap  \prod^{q}_{k=1}
	V_{\sigma_q\dots\sigma_{k+1}\sigma_k}(n_k,H)$$
	thus
	$$\begin{array}{rcll}
  && p(n_{1}, n_{2},...,n_{q},H)(V_{\sigma}(n_{1}, n_{2},...,n_{q},H))\\
  &\subset&G(n_1,n_2,\dots,n_q,H)\cap  \prod^{q}_{k=1} G_{\sigma_q\dots\sigma_{k+1}\sigma_k}(n_k,H) & (\mbox{by Lemma }  \ref{l4})\\
	&\subset&G_{\sigma}(n_{1}, n_{2},...,n_{q},H) & (\mbox{by Lemma } \ref{l12}) 
\end{array}$$
Let $(X^1,X^2,\dots,X^{q}) \in G_{\sigma}(n_1,\dots,n_q,H)$. By Lemma \ref{l12}, $(X^1,X^2,\dots,X^{q})$ is in 	$G(n_1,n_2,\dots,n_q,H)\cap  \prod^{q}_{k=1} G_{\sigma_q\dots\sigma_{k+1}\sigma_k}(n_k,H)$ and by Lemma \ref{l4}, there exists $$(v_1,v_2,\dots,v_{q})\in \prod^{q}_{k=1} V_{\sigma_q\dots\sigma_{k+1}\sigma_k}(n_k,H)$$ such that $$\prod^{q}_{k=1}p(n_k,H)(v_1,v_2,\dots,v_{q})=(X^1,X^2,\dots,X^{q}).$$
$(X^1,X^2,\dots,X^{q})\in G(n_1,\dots,n_q,H)$, thus $X^1\subset X^2 \subset \dots \subset X^{q}$. By (\ref{eq4}) $$(v_1,v_2,\dots,v_{q})\in V(n_{1}, n_{2},...,n_{q},H)$$ thus
$$(v_1,v_2,\dots,v_{q})\in V(n_1,n_2,\dots,n_q,H)\cap  \prod^{q}_{k=1}
	V_{\sigma_q\dots\sigma_{k+1}\sigma_k}(n_k,H)$$
By Lemma \ref{l9}, $(v_1,v_2,\dots,v_{q})\in V_{\sigma}(n_{1}, n_{2},...,n_{q},H)$ and $$p(n_{1}, n_{2},...,n_{q},H)(V_{\sigma}(n_{1}, n_{2},...,n_{q},H))=G_{\sigma}(n_{1}, n_{2},...,n_{q},H).$$ 
\end{proof}
\begin{lem}\label{l14}
Let $\sigma\in S(n_1,n_2,\dots,n_q,H)$, then
 $G_{\sigma}(n_1,\dots,n_q,H)$ is an open cell in $G(n_1,\dots,n_q,H)$ of dimension $\dim_{\R} K.d(\sigma)$. Furthermore
	$$\partial G_{\sigma}(n_1,\dots,n_q,H)=p(n_{1}, n_{2},...,n_{q},H)(\partial V_{\sigma}(n_1,\dots,n_q,H)).$$
\end{lem}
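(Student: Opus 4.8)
The plan is to prove this exactly as the Grassmannian analogue (Lemma \ref{l5}) was proved, namely as a direct application of the general descent lemma for cells, Lemma \ref{l20}. All the genuinely substantial work has already been isolated in the three preceding lemmas, so the role of this proof is to check that their hypotheses assemble into the hypotheses of Lemma \ref{l20} and then read off the conclusion.

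Concretely, I would take $X=V(n_1,\dots,n_q,H)$ and $Y=G(n_1,\dots,n_q,H)$, both of which are Hausdorff (the first is compact Hausdorff, as recorded just after \eqref{eq4}; the second is noted to be Hausdorff just after \eqref{eq6}), and I would take $f=p(n_1,\dots,n_q,H)$, which is continuous since it is the quotient map of \eqref{eq7}. For the cell I would take $e=V_{\sigma}(n_1,\dots,n_q,H)$, which by Lemma \ref{l7} is an open cell in $V(n_1,\dots,n_q,H)$ of dimension $\dim_{\R}K\cdot d(\sigma)$. The two hypotheses of Lemma \ref{l20} are then supplied verbatim by Lemma \ref{l11}: its first assertion gives that $f_{|e}$ is injective, and its second assertion gives $f(e)\cap f(\partial e)=\emptyset$.

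With the hypotheses verified, Lemma \ref{l20} yields at once that $f(e)$ is an open cell in $Y$ with $\dim f(e)=\dim e=\dim_{\R}K\cdot d(\sigma)$ and with $\partial f(e)=f(\partial e)$. It remains only to identify $f(e)$ with the set named in the statement, which is precisely the content of Lemma \ref{l13}, namely $p(n_1,\dots,n_q,H)(V_{\sigma}(n_1,\dots,n_q,H))=G_{\sigma}(n_1,\dots,n_q,H)$. Substituting this identification into the conclusion of Lemma \ref{l20} gives that $G_{\sigma}(n_1,\dots,n_q,H)$ is an open cell of the asserted dimension and that $\partial G_{\sigma}(n_1,\dots,n_q,H)=p(n_1,\dots,n_q,H)(\partial V_{\sigma}(n_1,\dots,n_q,H))$, which is exactly the displayed boundary formula.

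I do not expect any real obstacle in this particular argument, since it is purely a matter of matching notation to the template of Lemma \ref{l20}; the difficulty of the result lives entirely upstream. If I were to flag one place to be careful, it is to make sure the dimension claim is attributed to Lemma \ref{l7} (which computes $\dim V_{\sigma}=\dim_{\R}K\cdot d(\sigma)$) rather than re-derived, and that the boundary equality is taken to be the honest set-theoretic boundary $\bar e-e$ as in Lemma \ref{l16}, so that the equality $\partial f(e)=f(\partial e)$ from Lemma \ref{l20} is being invoked with the same meaning of $\partial$ throughout.
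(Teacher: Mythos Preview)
Your proof is correct and follows essentially the same route as the paper, which records it as an ``immediate consequence of Lemma \ref{l11}, Lemma \ref{l13} and Lemma \ref{l19}.'' The only cosmetic difference is that you invoke the packaged descent Lemma \ref{l20} (together with Lemma \ref{l7} for the dimension) rather than Lemma \ref{l19} directly; since Lemma \ref{l20} is itself an immediate consequence of Lemmas \ref{l19} and \ref{l16}, this is the same argument, and your version has the small advantage of citing the boundary identity $\partial f(e)=f(\partial e)$ and the dimension explicitly.
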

\begin{proof}
 Immediate consequence of Lemma \ref{l11}, Lemma \ref{l13} and lemma \ref{l19}. 
\end{proof}
\begin{lem}\label{l24}
Let $\sigma=(\sigma_1,\sigma_2,\dots,\sigma_q)\in S(n_1,n_2,\dots,n_q,n)$, $Y=(Y_1,Y_2,\dots,Y_q)\in\partial G_{\sigma}(n_1,n_2,\dots,n_q,H)$ and $\tau=(\tau_1,\tau_2,\dots,\tau_q)$ the Schubert symbol of $(Y_1,Y_2,\dots,Y_q)$ then
\begin{enumerate}
	\item $\tau_q\dots\tau_{k+1}\tau_k\leq\sigma_q\dots\sigma_{k+1}\sigma_k, 1\leq k\leq q$.
	\item $\tau_q\dots\tau_{k_0+1}\tau_{k_0}<\sigma_q\dots\sigma_{k_0+1}\sigma_{k_0}$ for some $k_0 \in \{1,2,\dots ,q \}$.
	\item $d(\tau)<d(\sigma)$.
\end{enumerate}
\end{lem}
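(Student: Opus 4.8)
The plan is to lift $Y$ into the generalized Stiefel manifold and read off the three assertions from the Grassmannian case (Lemma \ref{l4}) together with the product descriptions of $V_\sigma$ and its boundary (Lemma \ref{l9}); the genuinely new input is the dimension inequality (3), which I treat combinatorially. Throughout I write $\mu_k=\sigma_q\dots\sigma_{k+1}\sigma_k$ and $\nu_k=\tau_q\dots\tau_{k+1}\tau_k$ for the composite (Grassmannian) Schubert symbols of the $k$-th spaces.

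For (1) and (2) I would argue as follows. By Lemma \ref{l14}, $\partial G_\sigma(n_1,\dots,n_q,H)=p(n_1,\dots,n_q,H)(\partial V_\sigma(n_1,\dots,n_q,H))$, so I may pick $v=(v_1,\dots,v_q)\in\partial V_\sigma(n_1,\dots,n_q,H)$ with $p(n_1,\dots,n_q,H)(v)=Y$; then $Y_k=p(n_k,H)(v_k)$ for each $k$. Iterating Lemma \ref{l3}(2) along $Y_1\subset\dots\subset Y_q\subset H$ shows the Schubert symbol of $Y_k$ in $H$ equals $\nu_k$. By Lemma \ref{l9}(1) the closure $\overline{V_\sigma(n_1,\dots,n_q,H)}$ lies in $\prod_{k}\overline{V_{\mu_k}(n_k,H)}$, so each $v_k\in\overline{V_{\mu_k}(n_k,H)}$; applying Lemma \ref{l4} in both cases (according as $v_k$ lies in $V_{\mu_k}(n_k,H)$ or in $\partial V_{\mu_k}(n_k,H)$) yields the Schubert symbol of $Y_k$ to be $\le\mu_k$ in every case, which is (1). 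For (2), Lemma \ref{l9}(2) places $v$ in a product whose $k_0$-th factor is $\partial V_{\mu_{k_0}}(n_{k_0},H)$ for some $k_0$, and then Lemma \ref{l4}(2) forces $\nu_{k_0}<\mu_{k_0}$ strictly.

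The hard part is (3). One cannot simply invoke that $d$ is strictly increasing on $S(n_1,\dots,n_q,n)$, because the componentwise inequality $\tau_i\le\sigma_i$ may fail even though all composites satisfy (1) and (2) (for instance one may have $\tau_1>\sigma_1$ while still $\nu_1\le\mu_1$ and $\nu_2<\mu_2$). The plan is instead to express $d$ through the composites. Each general Schubert symbol $\sigma$ determines a nested family $A_k^\sigma=\mathrm{Im}(\mu_k)\subset\{1,\dots,n\}$ with $|A_k^\sigma|=n_k$ and $A_1^\sigma\subset\dots\subset A_q^\sigma$, equivalently a level function $\ell_\sigma(a)=\min\{k:a\in A_k^\sigma\}$ (with $\ell_\sigma(a)=q+1$ when $a\notin A_q^\sigma$). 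Using Lemma \ref{l23} to rewrite $d(\sigma)$ and regrouping the resulting sums over index pairs, I expect the identity
\[
d(\sigma)=\#\{(a,b):1\le b<a\le n,\ \ell_\sigma(a)<\ell_\sigma(b)\},
\]
so that $d(\sigma)$ is the number of inversions of $\ell_\sigma$. Assertions (1) and (2) translate exactly into the statement that, for every $k$, the increasing enumeration of $A_k^\tau$ is dominated termwise by that of $A_k^\sigma$, with at least one strict inequality: this is the Gale (dominance) order on the two flags of index sets.

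It then remains to prove a combinatorial monotonicity statement for the level functions, viewed as words over $\{1,\dots,q+1\}$ of fixed content: if $\ell_\tau$ is dominated by $\ell_\sigma$ at every threshold (every initial segment of $\ell_\tau$ carries at least as many letters $\le k$ as that of $\ell_\sigma$, for each $k$), then the number of inversions does not increase, and strictly decreases once the words differ. I would prove this by showing that the threshold-dominance order is generated by adjacent transpositions, each exchanging a descent into an ascent and so lowering the inversion count by exactly one; since (2) guarantees $A_{k_0}^\tau\neq A_{k_0}^\sigma$, at least one such step is required, giving $d(\tau)<d(\sigma)$. I expect this Bruhat-type lemma for words to be the main obstacle: the inversion formula itself follows mechanically from Lemma \ref{l23}, but checking that threshold-dominance coincides with the inversion order — i.e. that it is generated by single inversion-removing swaps respecting the fixed content — requires a careful exchange argument.
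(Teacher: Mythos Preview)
Your arguments for (1) and (2) are correct and parallel the paper's, though the paper works directly on the Grassmannian side: it uses Lemma~\ref{l12} to place $Y$ in $\prod_k\overline{G_{\mu_k}(n_k,H)}$ and in $\prod_k G_{\nu_k}(n_k,H)$, then invokes Lemma~\ref{l5} componentwise, rather than lifting to $\partial V_\sigma$ via Lemma~\ref{l14} and Lemma~\ref{l9}. Both routes give the same conclusions with the same content.

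For (3) the paper offers only the one--line sketch ``by induction on $q$ using the previous two properties''; your plan is far more explicit, and your inversion identity $d(\sigma)=\#\{(a,b):b<a,\ \ell_\sigma(a)<\ell_\sigma(b)\}$ is correct (it drops out of the computation $d(\sigma_i)=\#\{(a,b):a\in A_i,\ b\in A_{i+1}\!\setminus\!A_i,\ b<a\}$ after translating via the increasing bijection $\mu_{i+1}$). The translation of (1)--(2) into termwise Gale dominance of the chains $(A_k^\tau)$ by $(A_k^\sigma)$ is also correct.

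The gap is your generating step. Threshold dominance at every level is \emph{not} generated by adjacent descent--removing swaps; that would be the weak order on words, and your order is strictly larger. Already for complete flags with $n=3$ (so $\ell$ is a permutation of $\{1,2,3\}$), take $\ell_\sigma=231$ and $\ell_\tau=132$. Then $A_1^\sigma=\{3\}$, $A_2^\sigma=\{1,3\}$, $A_1^\tau=\{1\}$, $A_2^\tau=\{1,3\}$, so the Gale inequalities $\nu_k\le\mu_k$ hold (strict at $k=1$), yet the only descent--removing chain from $231$ is $231\to213\to123$, which never passes through $132$. What you have actually written down is the \emph{strong} Bruhat order on the parabolic quotient $S_n/(S_{c_1}\times\cdots\times S_{c_{q+1}})$, and the statement you need is that this order is graded by length $=\mathrm{inv}$. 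That is true, but its proof requires the exchange property for \emph{arbitrary} (not only adjacent) transpositions dropping the length by one, which is a different and harder lemma than the one you proposed. If you want to stay elementary, one workable variant is: given $(A_k^\tau)\lneq(A_k^\sigma)$ in Gale order at every level, produce positions $b<a$ with $\ell_\sigma(a)<\ell_\sigma(b)$ such that swapping the values at $a,b$ yields a word $\ell_{\sigma'}$ still Gale--dominating $(A_k^\tau)$ at every level and with $\mathrm{inv}(\ell_{\sigma'})<\mathrm{inv}(\ell_\sigma)$; then iterate. This is the correct exchange argument, and it is genuinely more delicate than the adjacent--swap version you sketched.
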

\begin{proof}
\begin{enumerate}
	\item \begin{eqnarray*}
	      Y=(Y_1,Y_2,\dots,Y_q)&\subset& \overline{G_{\sigma}(n_1,n_2,\dots,n_q,H)}\\
	                   &\subset&\overline{\prod_{k=1}^q G_{\sigma_q\dots\sigma_{k+1}\sigma_k}(n_k,H)} \qquad (\mbox{by \ref{l12}})\\
						      					&=&\prod_{k=1}^q\overline{G_{\sigma_q\dots\sigma_{k+1}\sigma_k}(n_k,H)}\\
			\end{eqnarray*}
					thus $Y_k\in \overline{G_{\sigma_q\dots\sigma_{k+1}\sigma_k}(n_k,H)}, 1\leq k\leq q.$
					\begin{eqnarray*}
					Y=(Y_1,Y_2,\dots,Y_q)&\in& G_{\tau}(n_1,n_2,\dots,n_q,H)\\
					                       &\subset&\prod_{k=1}^{q}G_{\tau_q\dots\tau_{k+1}\tau_k}(n_k,H) \qquad (\mbox{by \ref{l12}})
	\end{eqnarray*}		
It follows that 
$$\left\{
\begin{array}{l}
Y_k\in \overline{G_{\sigma_q\dots\sigma_{k+1}\sigma_k}(n_k,H)}\\
Y_k\in G_{\tau_q\dots\tau_{k+1}\tau_k}(n_k,H)
\end{array}
\right.
$$
By Lemma \ref{l5},
			$\tau_q\dots\tau_{k+1}\tau_k\leq \sigma_q\dots\sigma_{k+1}\sigma_k$, $1\leq k\leq q$.
			\item $Y\in\partial G_{\sigma}(n_1,n_2,\dots,n_q,H)$, thus
			$Y\notin G_{\sigma}(n_1,n_2,\dots,n_q,H)$. There exists $k_0\in\{1,\dots,q\}$ such that
			$Y_{k_0}\notin G_{\sigma_q\dots\sigma_{k_0+1}\sigma_{k_0}}(n_{k_0},H),$ thus
			$Y_{k_0}\in\partial G_{\sigma_q\dots\sigma_{k_0+1}\sigma_{k_0}}(n_{k_0},H)$. By Lemma \ref{l4}
			$\tau_q\dots\tau_{k_0+1}\tau_{k_0}<\sigma_q\dots\sigma_{k_0+1}\sigma_{k_0}$.
\item The proof of the fact that $d(\tau)<d(\sigma)$ is by induction on $q$ and uses the previous two properties.		
\end{enumerate}
\end{proof}		
\begin{thm}\label{t2}
There is a CW-complex structure on $G
(n_1,n_2,\dots,n_q,H)$ with open cells
$G_{\sigma}(n_1,n_2,\dots,n_q,H),  \sigma\in S(n_1,n_2,\dots,n_q,H)$.
\end{thm}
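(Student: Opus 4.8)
The plan is to apply the abstract CW-complex criterion of Lemma \ref{l22} to the space $G(n_1,\dots,n_q,H)$, using the partition into the subsets $G_{\sigma}(n_1,\dots,n_q,H)$ for $\sigma\in S(n_1,\dots,n_q,n)$. This strategy exactly parallels the proof of Theorem \ref{t1} for Grassmannians, and all the technical ingredients have been prepared in the preceding lemmas of this section. I would begin by recording that $G(n_1,\dots,n_q,H)$ is Hausdorff (it is a subspace of a product of the Hausdorff spaces $G(n_k,H)$) and that it is compact, so it certainly admits a \emph{finite} partition.

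The verification then breaks into the two hypotheses of Lemma \ref{l22}. For the partition hypothesis, Lemma \ref{l6} already states that the family $\bigl(G_{\sigma}(n_1,\dots,n_q,H)\bigr)_{\sigma}$ partitions the flag manifold, and the index set $S(n_1,\dots,n_q,n)$ is finite, so this is immediate. The heart of the argument is that each member of the partition really is an open cell and that the dimension strictly drops as one passes to the boundary. Lemma \ref{l14} supplies the first half: every $G_{\sigma}(n_1,\dots,n_q,H)$ is an open cell of dimension $\dim_{\R}K\cdot d(\sigma)$, with boundary $\partial G_{\sigma}(n_1,\dots,n_q,H)=p(n_1,\dots,n_q,H)(\partial V_{\sigma}(n_1,\dots,n_q,H))$.

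It remains to check the descent condition of Lemma \ref{l22}(2): if $Y\in\partial G_{\sigma}(n_1,\dots,n_q,H)$ lies in the unique cell $G_{\tau}(n_1,\dots,n_q,H)$ of the partition, then $\dim G_{\tau}<\dim G_{\sigma}$. Here I would invoke Lemma \ref{l24}: for a boundary point $Y$ with Schubert symbol $\tau=(\tau_1,\dots,\tau_q)$, part (3) of that lemma gives precisely $d(\tau)<d(\sigma)$, whence $\dim G_{\tau}(n_1,\dots,n_q,H)=\dim_{\R}K\cdot d(\tau)<\dim_{\R}K\cdot d(\sigma)=\dim G_{\sigma}(n_1,\dots,n_q,H)$. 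This is the key point, and fortunately the nontrivial combinatorial work behind it---the monotonicity of the composite Schubert symbols and the strict inequality $d(\tau)<d(\sigma)$---has been isolated into Lemma \ref{l24}.

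With these three inputs assembled, the theorem is an immediate consequence of Lemma \ref{l22}. Concretely, the proof reads: ``$G(n_1,\dots,n_q,H)$ is compact Hausdorff; by Lemma \ref{l6} the sets $G_{\sigma}(n_1,\dots,n_q,H)$, $\sigma\in S(n_1,\dots,n_q,n)$, form a finite partition; by Lemma \ref{l14} each is an open cell of dimension $\dim_{\R}K\cdot d(\sigma)$; and by Lemma \ref{l24}(3), if $Y\in\partial G_{\sigma}(n_1,\dots,n_q,H)$ belongs to $G_{\tau}(n_1,\dots,n_q,H)$ then $d(\tau)<d(\sigma)$, so $G_{\tau}$ has strictly lower dimension. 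The hypotheses of Lemma \ref{l22} are satisfied, and $G(n_1,\dots,n_q,H)$ is a CW-complex.'' The main obstacle in the overall development is really Lemma \ref{l24}(3); at the level of this theorem, however, the entire argument is a short bookkeeping assembly of results already in hand, and I would expect no genuine difficulty to arise in writing it out.
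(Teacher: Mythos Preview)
Your proposal is correct and follows exactly the paper's approach: the paper's proof is the single line ``immediate consequence of lemmas \ref{l22}, \ref{l6}, \ref{l14} and \ref{l24},'' and you have spelled out precisely how those four lemmas combine.
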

\begin{proof}
immediate consequence of lemmas \ref{l22}, \ref{l6}, \ref{l14} and \ref{l24}.
\end{proof}
\begin{cor}\label{c2}
$G(n_1,n_2,\dots,n_q,H)$ is a topological manifold of dimension
 $$((n_1n_2+n_2n_3+\dots+n_{q-1}n_q+n_qn)-({n_1}^{2}+{n_2}^{2}+\dots+{n_q}^{2})).\dim_{\R}K.$$

\end{cor}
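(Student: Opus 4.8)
The plan is to adapt, almost verbatim, the argument of Corollary~\ref{c1} to the flag setting. Since $G(n_1,\dots,n_q,H)$ is already known to be Hausdorff and carries the cell structure of Theorem~\ref{t2}, it suffices to exhibit, for each flag $X_0=(X_0^1,\dots,X_0^q)$, an open neighbourhood homeomorphic to a Euclidean space of the asserted dimension. The strategy is to choose the orthonormal basis of $H$ adapted to $X_0$ so that $X_0$ falls into the unique top-dimensional cell of the resulting partition; as that cell is an open cell and (as argued below) an open subset of $G(n_1,\dots,n_q,H)$, it furnishes the required chart.

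First I would identify the top cell. With the convention $n_{q+1}=n$, the maximal element of $S(n_k,n_{k+1})$ is $\sigma_k(j)=n_{k+1}-n_k+j$, for which $d(\sigma_k)=n_k(n_{k+1}-n_k)$; hence the maximal general Schubert symbol $\sigma=(\sigma_1,\dots,\sigma_q)$ satisfies
\[
d(\sigma)=\sum_{k=1}^q n_k(n_{k+1}-n_k)=\bigl(n_1n_2+\dots+n_{q-1}n_q+n_qn\bigr)-\bigl(n_1^2+\dots+n_q^2\bigr),
\]
so by Lemma~\ref{l14} the cell $G_\sigma(n_1,\dots,n_q,H)$ has dimension $\dim_{\R}K\cdot d(\sigma)$, matching the claimed value. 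To place $X_0$ in this cell I would build an orthonormal basis $(h_i)_{1\le i\le n}$ of $H$ with $X_0^k=\langle h_{n-n_k+1},\dots,h_n\rangle$ for every $k$, obtained by extending an orthonormal basis of $X_0^1$ through the flag up to $H$ and reversing the indexing. A direct computation then gives $\dim X_0^k\cap H_i=\max(0,\,i-n+n_k)$, so the Schubert symbol of $X_0^k$ in $H$ is $\sigma_{X_0^k}^H(j)=n-n_k+j$, the maximal element of $S(n_k,n)$. Using Lemma~\ref{l3} in the form $\sigma_{X_0^k}^H=\sigma_{X_0^{k+1}}^H\,\sigma_{X_0^k}^{X_0^{k+1}}$, together with the fact that $\sigma_{X_0^{k+1}}^H$ is likewise maximal, I would solve for $\sigma_{X_0^k}^{X_0^{k+1}}(j)=n_{k+1}-n_k+j$; that is, each component of the Schubert symbol of $X_0$ is maximal, whence $\sigma_{X_0}=\sigma$ and $X_0\in G_\sigma(n_1,\dots,n_q,H)$.

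Finally I would show that this top cell is open in $G(n_1,\dots,n_q,H)$, which is where the only real content lies. Since $d$ is strictly increasing and $\sigma$ is maximal, every other cell $G_\tau$ has $d(\tau)<d(\sigma)$; by Lemma~\ref{l24} the boundary $\partial G_\tau$ meets only cells of dimension strictly below $d(\tau)$, so $\overline{G_\tau}=G_\tau\cup\partial G_\tau$ is disjoint from $G_\sigma$ whenever $\tau\neq\sigma$. Consequently $\bigcup_{\tau\neq\sigma}G_\tau=\bigcup_{\tau\neq\sigma}\overline{G_\tau}$ is a finite union of closed sets, so $G_\sigma(n_1,\dots,n_q,H)$ is open, and being an open cell it is homeomorphic to an open ball. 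As $X_0$ was arbitrary and $d(\sigma)$ depends only on $(n_1,\dots,n_q,n)$, every point admits a Euclidean neighbourhood of the same dimension $\dim_{\R}K\cdot d(\sigma)$, establishing the corollary. I expect the main obstacle to be the basis-adaptation step and the verification, through Lemma~\ref{l3}, that each component of the Schubert symbol becomes maximal; the openness of the top cell is then a routine dimension count.
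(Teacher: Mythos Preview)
Your proof is correct and follows essentially the same approach as the paper's own proof: choose an orthonormal basis adapted to the given flag so that it lands in the unique top cell $G_\sigma$, which then serves as a Euclidean neighbourhood of the correct dimension. In fact you are more thorough than the paper in two places---you verify $\sigma_{X_0}=\sigma$ explicitly via Lemma~\ref{l3} (the paper just says ``it is easily checked''), and you justify why the top cell is open in $G(n_1,\dots,n_q,H)$ (the paper simply asserts this).
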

\begin{proof}
$G(n_1,n_2,\dots,n_q,H)$ is a Hausdorff. Let $X_0=(X_0^1,X_0^2,\dots,X_0^q)\in G(n_1,n_2,\dots,n_q,H)$ and construct
by induction an orthonormal basis for $H$ such that
$(h_{n-n_k+1},\dots,h_{n-1},h_n)$ is a basis for $X_0^k$, $1\leq k\leq q$. Let 
$\sigma=(\sigma_1,\sigma_2,\dots,\sigma_q)\in S(n_1,n_2,\dots,n_q,n)$ be the general Schubert symbol given by
$$\sigma_k(i)=\left\{\begin{array}{ll}
n-n_q+i& \mbox{ for } k= q \mbox{ and } 1\leq i\leq n_q\\
n_{k+1}-n_k+i& \mbox{ for } 1\leq k\prec q \mbox{ and } 1\leq i\leq n_k
\end{array}
\right.
$$
It is easily checked  that 
$X_0\in G_{\sigma}(n_1,n_2,\dots,n_q,H).$\\
$G_{\sigma}(n_1,n_2,\dots,n_q,H)$ is an open cell in $G(n_1,n_2,\dots,n_q,H)$ of top dimension. It is therefore an open neighborhood of $X_0$ which is homeomorphic to an Euclidean space. $G(n_1,n_2,\dots,n_q,H)$ is a topological manifold of dimension
\begin{eqnarray*}
\dim _{\R}K.d(\sigma)&=&\dim _{\R}K.\sum_{k=1}^qd(\sigma_k)\\
                           &=&\dim _{\R}K.(n_1(n_2-n_1)+n_2(n_3-n_2)+\\
													&&\dots+n_{q-1}(n_q-n_{q-1})+n_q(n-n_q))\\
													&=&\dim_{\R}K.((n_1n_2+n_2n_3+\dots+n_{q-1}n_q+n_qn)
\\
													&&-({n_1}^{2}+{n_2}^{2}+\dots+{n_q}^{2})).
\end{eqnarray*}
\end{proof}

\begin{rem}
\begin{enumerate}
\item For $K=\C$ or $K=\H$, all cells in $G(n_1,n_2,\dots,n_q,H)$ are even dimensional. Consequently, they are all cycles and they form a basis for the integral homology groups of $G(n_1,n_2,\dots,n_q,H)$. 
	\item Let $H_{\C}$ (resp. $H_{\H}$) be an $n$-dimensional $\C$ vector space (resp. $H_{\H}$ vector space). There is a one-to-one correspondence between cells of $G(n_1,n_2,\dots,n_q,H_{\C})$ of dimension $2i$ and cells of $G(n_1,n_2,\dots,n_q,H_{\H})$ of dimension $4i$. In particular  $P_{\H}(t)= P_{\C}(t^{2})$ where $P_{\C}$ (resp. $P_{\H}$) is the Poincar\'e polynomial of $G(n_1,n_2,\dots,n_q,H_{\C})$ (resp. $G(n_1,n_2,\dots,n_q,H_{\H})$. It follows that $P_{\H}(1)= P_{\C}(1)$ and the two manifolds
	$G(n_1,n_2,\dots,n_q,H_{\C})$ and $G(n_1,n_2,\dots,n_q,H_{\H})$
have the same Euler characteristic. 
\end{enumerate} \color[rgb]{0,0,0}
\end{rem}

\end{document}